\newtheorem{thm}{Theorem}[section]
\newtheorem{lemma}[thm]{Lemma}
\newtheorem{prop}[thm]{Proposition}
\newtheorem{conj}[thm]{Conjecture}
\newtheorem{rem}[thm]{Remark}
\theoremstyle{definition}
\newtheorem{Example}[thm]{Example}
\newtheorem{Remark}[thm]{Remark}
\def\zz{\mathbb Z}
\def\gg{\mathbb G}
\def\ee{\mathbb E}
\def\ga{\gamma}
\def\ep{\ve}
\def\al{\alpha}
\def\be{\beta}
\def\om{\omega}
\def\ve{\varepsilon}
\def\cC{\mathcal C}
\def\T{\mathbf{T}}
\def\CT{\mathcal T}
\def\ssu{\subset}
\def\<{\langle}
\def\>{\rangle}
\def\ts{\hskip.015cm}
\def\0{{\mathbf 0}}
\def\id{{\rm \bf i}}
\newcommand{\ra}{\rightarrow}
\newcommand{\e}{\ep}
\newcommand{\lb}{\langle}
\newcommand{\rb}{\rangle}
\newcommand{\len}{\ell}
\newcommand{\deq}{\mathrel{\mathop:}=}
\newcommand{\pathto}[1][]{\:\rightsquigarrow^{#1}\:}
\newcommand{\edgeto}{\:\rightarrow\:}
\newcommand{\oaprod}{\mathop{\overrightarrow{\prod}}}
\newcommand{\GL}{GL}
\renewcommand{\i}{\id}
\newcommand{\Frat}{\Phi} 
\newcommand{\Tbin}{\mathbf{T}}
\newcommand{\AutT}{\Aut(\Tbin)}
\newcommand{\setc}[3][]{\left\{ #2 \,\middle|\, #3 \vphantom{#1|}\right\}}
\newcommand{\setcst}[3][]{\left\{#2\;\;\text{s.t.}\;#3 \vphantom{#1|}\right\}}
\newcommand{\gen}[1]{\left\lb #1 \right\rb}
\newcommand{\abs}[2][]{\left|#2 \vphantom{#1|}\right|}
\newcommand{\size}{\mathop{\#}}
\DeclareMathOperator{\Stab}{Stab}
\DeclareMathOperator{\supp}{supp}
\DeclareMathOperator{\Rist}{Rist}
\DeclareMathOperator{\Aut}{Aut}
\DeclareMathOperator{\da}{\!\downarrow}
\DeclareMathOperator{\ord}{ord}
\DeclareMathOperator{\Cay}{Cay}
\DeclareMathOperator{\Schr}{Schr}
\def\dist{{\text {\rm dist} } }
\begin{document}
\title{Growth in product replacement graphs of Grigorchuk groups}

\author[Anton~Malyshev]{ \ Anton~Malyshev$^\star$}
\author[Igor~Pak]{ \ Igor~Pak$^\star$}

\thanks{\thinspace ${\hspace{-.45ex}}^\star$Department of Mathematics,
UCLA, Los Angeles, CA, 90095.
\hskip.06cm
Email:
\hskip.06cm
\texttt{\{amalyshev,pak\}@math.ucla.edu}}

\begin{abstract}
The product replacement graph $\Gamma_k(G)$ is the graph on the
generating~$k$-tuples of a group~$G$, with edges corresponding to Nielsen moves. We prove the exponential growth of product replacement graphs $\Gamma_k(\gg_\omega)$ of Grigorchuk groups, for~$k \geq 5$.
\end{abstract}

\maketitle
\theoremstyle{plain}

\section{Introduction}

The \emph{product replacement graphs} $\Gamma_k(G)$ are the graphs on
generating $k$-tuples of a group~$G$, with edges corresponding to
multiplications of one generator by another (see below).
These graphs play an important role in computational group theory
(see e.g.~\cite{BL,NP,whatprp}), and are related to the Andrews--Curtis conjecture
in algebraic topology (see e.g.~\cite{BKM,BLM,Met}).
For infinite groups, proving non-amenability of graphs $\Gamma_k(G)$ is a
major open problem, closely related to Kazhdan's property~(T) of \ts $\Aut(F_k)$.
In this paper we establish a weaker property, the exponential growth
of product replacement graphs, for the Grigorchuk group $\gg$ and its generalizations $\gg_\omega$.

\smallskip

Let us begin by stating the main conjecture we address in this paper.

\begin{conj}[Main Conjecture] \label{conj:main}
Let~$G$ be an infinite group generated by $d$ elements.  Then the
product replacement graphs $\Gamma_k(G)$ have exponential growth, for
all $k\ge d+1$.
\end{conj}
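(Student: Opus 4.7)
The goal is to produce, at any vertex of $\Gamma_k(G)$, an embedded binary subtree, which immediately yields exponential growth. Let $(g_1,\ldots,g_k)$ be a generating tuple. Since $k \geq d+1$, the tuple has at least one redundant coordinate: after reordering and possibly a few preliminary Nielsen moves, we may assume $g_1,\ldots,g_d$ already generate $G$. The idea is then to use Nielsen moves to modify the remaining coordinates through a large and well-controlled family of group elements.

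First, observe that the move $(g_1,\ldots,g_k) \mapsto (g_1,\ldots,g_{k-1}, g_k g_i^{\pm 1})$ allows one to right-multiply the last entry by any generator of the first block, and $n$ such moves realize all words of length $n$ in $g_1,\ldots,g_{k-1}$. So the set of tuples reachable in $n$ steps contains one tuple $(g_1,\ldots,g_{k-1},g_k w)$ for each element $w$ in the ball of radius $n$ in $\Cay(G,\{g_1,\ldots,g_{k-1}\})$, and distinct $w$ give distinct tuples. If $G$ has exponential word growth, this already finishes the proof; if $G$ contains a non-abelian free subgroup, a minor variant works along the same lines. The non-trivial case is when $G$ has subexponential (in particular, intermediate) growth, as is the case for the Grigorchuk groups $\gg_\omega$.

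For such $G$ the plan is to find two compositions of Nielsen moves $A, B : \Gamma_k(G) \to \Gamma_k(G)$ such that every reduced word of length $n$ in $A^{\pm 1},B^{\pm 1}$ yields a distinct tuple when applied to the chosen base vertex. This produces a free $F_2$-action on the orbit and hence an embedded binary tree. To verify distinctness one needs a complexity invariant on tuples that is strictly increased in a controlled way by each of $A,B$. For the Grigorchuk groups the natural resource is self-similarity: one descends from a well-chosen generating tuple to a level-$n$ rigid stabilizer, inside which one finds several commuting copies of (a conjugate of) $\gg_\omega$, and uses the section maps to pull back moves that act independently on different branches. Iterating this construction across levels should realize the desired binary tree.

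The main obstacle is this last step: showing that two distinct sequences of Nielsen moves do not accidentally produce the same tuple. In the free-subgroup or exponential-growth case distinctness comes for free from length in a Cayley graph; in the subexponential case it has to come from the $\Aut(F_k)$-action itself, which is where the hypothesis $k \geq d+1$ is genuinely used and where more coordinates help (presumably explaining the threshold $k \geq 5$ that appears in the abstract). I expect that the bulk of the proof goes into constructing an invariant on generating tuples of $\gg_\omega$ — built from the section decomposition along the rooted binary tree — which tracks enough of the Nielsen-move history to certify that distinct words in $A,B$ remain distinct after application. Everything else, including the reduction to a single base vertex and the choice of initial generating tuple, is comparatively routine.
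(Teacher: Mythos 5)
The statement you are trying to prove is the paper's \emph{Main Conjecture}: it is open, the paper offers no proof of it, and your proposal does not close it either. The easy cases you dispose of correctly (exponential word growth via varying the redundant coordinate over a ball in the Cayley graph; this is essentially the paper's Proposition~\ref{prop:poly_exp_fast}, which also handles polynomial growth via Gromov's theorem and an embedded $\Gamma_2(\zz)$ --- a case your free-subgroup remark does not actually cover). But the entire content of the problem is the intermediate-growth case, and there your text is a plan, not an argument: the ``complexity invariant on tuples that tracks the Nielsen-move history'' is named but never constructed, and constructing it is precisely the open difficulty. Worse, the target you set yourself --- two Nielsen words $A,B$ generating a free action on the orbit, i.e.\ an embedded $4$-regular tree --- is strictly stronger than exponential growth; it would essentially establish non-amenability of $\Gamma_k(\gg)$, which the paper isolates as a separate conjecture (Conjecture~\ref{conj:grig-nonam}) and explicitly says its combinatorial methods cannot reach. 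So the one step you defer is both unproved and harder than the statement you want.

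For comparison, the paper's partial result (Theorem~\ref{thm:gg_fast}, for the groups $\gg_\omega$ only) uses a genuinely different and weaker mechanism. Instead of a free action of $F_2$ on tuples, it produces a \emph{cubic} $2^m$-tuple: a nontrivial element $g\in\Rist_{\gg_\omega}(1^m)$ of word length $O(2^m)$ (Lemma~\ref{lem:gg_short_rist}), conjugated along a spanning tree of the level-$m$ Schreier graph to get $2^m$ elements with pairwise disjoint $m$-supports (Lemma~\ref{lem:short_rist_fast}). Distinctness of the $2^{2^m}$ subset-products is then certified by reading off the support, not by any invariant of the Nielsen-move history; Lemma~\ref{lem:cubic_fast} converts this into $2^{2^m}$ tuples inside a ball of radius $O(2^m)$, and Lemma~\ref{lem:sparse_exp} upgrades the log-dense sequence of radii $2^m$ to exponential growth. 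Note also that this only yields $k\ge 5$ for the $3$-generated group $\gg$, whereas the conjecture asks for $k\ge 4$; even for $\gg$ the statement as you set out to prove it remains open. If you want a provable statement, either restrict to $\gg_\omega$ and replace your free-action step with the disjoint-support cube construction, or restrict to groups of polynomial or exponential growth.
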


\medskip

\noindent
Formally speaking, graphs~$\Gamma_k(G)$ can be disconnected, in which
case we conjecture that \emph{at least one} connected component
has exponential growth.

\smallskip

The motivation behind our Main Conjecture is rather interesting,
which makes the conjecture both natural and speculative.  First,
recall that $\Gamma_k(G)$ are Schreier graphs of $\Aut(F_k)$, generated
by Nielsen transformations~\cite{LP} (see also~\cite{LZ,whatprp}).
A well known conjecture states that $\Aut(F_k)$ has \emph{Kazhdan's
property~$(T)$} \ts for $k>3$. If true, this would imply the following
conjecture:

\begin{conj}\label{conj:non-amenable}
For every infinite group $G$, product replacement graphs $\Gamma_k(G)$
are \emph{non-amenable}, for $k$ large enough.
\end{conj}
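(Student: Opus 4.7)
The plan is to derive Conjecture~\ref{conj:non-amenable} from the conjectured Kazhdan property~$(T)$ for $\Aut(F_k)$, $k > 3$, exactly along the route suggested in the text. Fix an infinite group $G$ generated by $d$ elements, take $k \ge d$, fix a generating $k$-tuple $\mathbf{g} \in G^k$, and set $H = \Stab_{\Aut(F_k)}(\mathbf{g})$ for the Nielsen action of $\Aut(F_k)$ on $k$-tuples of $G$. Since $\Gamma_k(G)$ is a Schreier graph of $\Aut(F_k)$, the connected component of $\mathbf{g}$ in $\Gamma_k(G)$ is canonically identified with the coset space $\Aut(F_k)/H$, with edges labeled by a finite symmetric set $S$ of Nielsen generators.

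The first step is to check that $\Aut(F_k)/H$ is infinite. Because $G$ is infinite and $k \ge d$, repeated Nielsen moves applied to $\mathbf{g}$ produce tuples whose coordinates are arbitrarily long words in $\mathbf{g}^{\pm 1}$, and infinitely many of these words represent distinct elements of $G$. Hence the orbit of $\mathbf{g}$ under $\Aut(F_k)$ is infinite, so $[\Aut(F_k):H] = \infty$. Consequently, the quasi-regular representation $\pi$ of $\Aut(F_k)$ on $\ell^2(\Aut(F_k)/H)$ admits no nonzero invariant vector, since $\Aut(F_k)$ acts transitively and constant functions on an infinite countable set are not square-summable.

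Next, I would invoke property~$(T)$, which says that any unitary representation of $\Aut(F_k)$ without nonzero invariant vectors has no almost invariant vectors either. Applied to $\pi$, this yields a uniform spectral gap for the Markov operator
\[
M_S \,=\, \frac{1}{|S|} \sum_{s \in S} \pi(s)
\]
on $\ell^2(\Aut(F_k)/H)$; that is, $\|M_S\| < 1$. By Kesten's criterion (in the form of the Cheeger inequality for regular graphs), the spectral gap of $M_S$ is equivalent to non-amenability of the $|S|$-regular Schreier graph $\Aut(F_k)/H$, which is the component of $\mathbf{g}$ in $\Gamma_k(G)$. Since every connected component of $\Gamma_k(G)$ is the Schreier graph of $\Aut(F_k)$ on the coset space of some generating-tuple stabilizer, the same argument gives non-amenability of every component, and in particular the weaker statement formulated in the excerpt.

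The main obstacle is, of course, property~$(T)$ for $\Aut(F_k)$ itself, which is a deep open problem for general $k$ and is known to fail for small $k$ (e.g.\ $k = 2, 3$); everything else in the plan is by-now-standard representation theory and spectral graph theory. A small auxiliary point worth checking carefully is the identification of the component of $\mathbf{g}$ as the Schreier graph $\Aut(F_k)/H$ with edge labels in $S$ matching the standard Nielsen generators of $\Gamma_k(G)$; this is routine but needs the combinatorial setup of~\cite{LP}.
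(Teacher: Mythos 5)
This statement is Conjecture~\ref{conj:non-amenable}: the paper does not prove it, and explicitly presents it as open. What you have written is not a proof but a conditional reduction: every step after ``Next, I would invoke property~$(T)$'' rests on Kazhdan's property~$(T)$ for $\Aut(F_k)$, which is itself a famous open conjecture (and, as you note, is actually \emph{false} for $k=2,3$). That reduction --- Schreier graphs of a property~$(T)$ group on infinite coset spaces are non-amenable, via the spectral gap of the quasi-regular representation and Kesten's criterion --- is precisely the implication the paper already records in the sentence preceding the conjecture, following \cite{LP}. So you have rederived the known motivation for the conjecture, not settled it; the ``main obstacle'' you acknowledge at the end is the entire content of the problem. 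Note that the paper's actual contribution is deliberately weaker: it proves exponential growth (not non-amenability) of $\Gamma_k(\gg_\omega)$ by an explicit combinatorial construction of large cubes (Lemmas~\ref{lem:cubic_fast} and~\ref{lem:short_rist_fast}), and it states separately (Conjecture~\ref{conj:grig-nonam}) that non-amenability remains open even for the Grigorchuk group.

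Two smaller points in your sketch also need care. First, the claim that the $\Aut(F_k)$-orbit of $\mathbf{g}$ is infinite whenever $G$ is infinite is true but not immediate from ``arbitrarily long words represent infinitely many distinct elements'': long words in the generators need not arise as coordinates of tuples in the orbit in an obviously injective way, and the paper itself treats the infinitude of all components as a \emph{consequence} of the conjecture rather than a given. One correct route: from $(g_1,\dots,g_k)$ one reaches $(g_1,\dots,g_{k-1},g_k v)$ for every $v\in\gen{g_1,\dots,g_{k-1}}$, and if that subgroup is finite one argues further using conjugates; this deserves a lemma. Second, the edge set of $\Gamma_k(G)$ is generated by the moves $R_{ij}^{\pm1},L_{ij}^{\pm1}$, which generate only a finite-index subgroup of $\Aut(F_k)$; property~$(T)$ does pass to finite-index subgroups, so this is harmless, but it should be said.
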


In particular, this conjecture implies that all connected components of~$\Gamma_k(G)$
are infinite and have exponential growth, for all~$G$ and~$k$ large enough.
We should mention that $\Aut(F_k)$ does not have~$(T)$ for $k=2$ and~$3$
(see~\cite{GL,lubotzky_book}).  On the other hand,  the non-amenability of
$\Gamma_n(G)$ follows from a weaker property~$(\tau)$ for an appropriate family
of subgroups (see~\cite{LZ}).

\smallskip

We approach the Main Conjecture by looking at the growth of groups.
The conjecture is straightforward for groups of exponential growth.
It can be shown that the conjecture holds for virtually nilpotent
groups (see Section~\ref{s:basic}). By Gromov's theorem, this implies
the conjecture for all groups of polynomial growth.

Unfortunately, groups of intermediate growth lack the rigid
structure of nilpotent groups, so much that even explicit
examples are difficult to construct and analyze
(see e.g.~\cite{de_la_harpe,Gri3}).  Even now, much remains
open for the classical \emph{Grigorchuk group}~$\gg$, the first
example of a group of intermediate growth discovered by
Grigorchuk (see~\cite{grigorchuk_growth,solved_unsolved}).

We present a new combinatorial technique which allows us to establish
the conjecture for a large class of Grigorchuk groups~$\gg_\om$.
This is the main result of this paper:

\begin{thm}\label{thm:gg_fast}
Let $\gg_\omega$ be a generalized Grigorchuk group. Then $\Gamma_n(\gg_\omega)$
is connected for each $n \geq 4$, and has exponential growth for each $n \geq 5$.
\end{thm}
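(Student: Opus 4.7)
The plan is to prove connectivity and exponential growth separately, anchored on the canonical generating tuple
\[
\mathbf{s}_n \;=\; (a,\, b_\omega,\, c_\omega,\, d_\omega,\, \i,\, \ldots,\, \i) \in \gg_\omega^n.
\]

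For connectivity of $\Gamma_n(\gg_\omega)$ when $n \geq 4$, I would show that every generating $n$-tuple is Nielsen-equivalent to $\mathbf{s}_n$ by a level-by-level argument using the self-similar embedding $\Stab_{\gg_\omega}(1) \hookrightarrow \gg_{\sigma\omega} \times \gg_{\sigma\omega}$. First, I reduce modulo $\Stab_{\gg_\omega}(k)$, which is a finite $2$-group, and invoke the classical fact that any two generating $n$-tuples of a finite $p$-group of rank at most $n-1$ are Nielsen-equivalent. This aligns an arbitrary tuple $\mathbf{g}$ with $\mathbf{s}_n$ modulo $\Stab(k)$; the hypothesis $n \geq 4$ is used to supply the single ``scratch'' slot beyond the rank-$3$ abelianization needed for the row-reduction style Nielsen moves. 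The remaining difference lies in $\Stab(k)$ and is removed by lifting the argument into the two self-similar copies of $\gg_{\sigma\omega}$ at the next level; because any given element eventually escapes $\Stab(k)$, the recursion terminates.

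For exponential growth when $n \geq 5$, the plan is to construct two Nielsen-move words $\sigma_0, \sigma_1 \in \Aut(F_n)$ of bounded length such that the map
\[
\tau = (\tau_1,\ldots,\tau_L) \mapsto \sigma_{\tau_L} \circ \cdots \circ \sigma_{\tau_1}(\mathbf{s}_n), \qquad \tau \in \{0,1\}^L,
\]
is injective. This immediately places $2^L$ distinct tuples in the ball of radius $O(L)$ around $\mathbf{s}_n$ in $\Gamma_n(\gg_\omega)$, yielding exponential growth. The two extra slots available when $n \geq 5$ are crucial: one acts as a workspace for propagating a modification into the self-similar subtree via a short conjugation pattern of the form $g_j \leftarrow a g_j a$, and the other records the bit $\tau_i$. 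A natural candidate is to let $\sigma_0$ and $\sigma_1$ write $b_\omega$ and $c_\omega$ respectively into a designated slot sandwiched between $a$'s, so that the resulting slot is (roughly) a word $a h_{\tau_1} a h_{\tau_2} \cdots a h_{\tau_L}$ with $h_0 = b_\omega$, $h_1 = c_\omega$; the injectivity of $\tau \mapsto \sigma_\tau(\mathbf{s}_n)$ would then be detected by tracking this slot's action on a sufficiently deep level of the binary tree.

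The main obstacle, and the place requiring the new combinatorial technique advertised in the introduction, is precisely proving injectivity of this encoding. Since $\gg_\omega$ has intermediate growth, the $2^L$ candidate words of length $O(L)$ cannot all be distinguished by their value as group elements in $\gg_\omega$: the relevant ball contains only $\exp(L^\alpha)$ elements, so collisions are forced at the group level. The way out is to distinguish tuples not by their coordinates as group elements, but by a finer invariant of the full $n$-tuple coming from the self-similar action, such as a portrait at depth $\Theta(L)$ of a jointly constructed word in the tuple's entries; the recursive relations of $\gg_\omega$ then need to be shown not to produce any collapses in this invariant under iteration of $\sigma_0$ and $\sigma_1$. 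Making this argument uniform in the defining sequence $\omega$ is an additional technical point, which I would handle by case-splitting on a finite prefix of $\omega$ and reducing the tail via the self-similar step used in the connectivity argument.
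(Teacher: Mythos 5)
There is a genuine gap in the exponential-growth half, and it is fatal to the scheme as you describe it. If $\sigma_0,\sigma_1$ are Nielsen words of bounded length, then every coordinate of $\sigma_{\tau_L}\circ\cdots\circ\sigma_{\tau_1}(\mathbf{s}_n)$ is a word of length $O(L)$ in the generators of $\gg_\omega$; since $\gg_\omega$ has intermediate growth, the number of possible values of each coordinate is $\exp(O(L^\alpha))$ with $\alpha<1$, so the number of distinct resulting $n$-tuples is subexponential in $L$ and the map $\tau\mapsto\sigma_\tau(\mathbf{s}_n)$ cannot be injective. Your proposed escape --- distinguishing tuples ``not by their coordinates as group elements but by a finer invariant of the full $n$-tuple'' --- does not exist: vertices of $\Gamma_n(\gg_\omega)$ \emph{are} $n$-tuples of group elements, and two tuples whose coordinates collide in the group are the same vertex. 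The paper's resolution is structurally different: a ball of radius $r$ in $\Gamma_{n+1}$ can contain tuples whose last coordinate has word length $\Theta(r^2)$, because a single slot can accumulate a long product while the other slots are being moved around cheaply. Concretely, one constructs (by an explicit recursion on the defining string $\omega$, with a case split on $\omega_k\in\{b,c,d\}$ and a dihedral/order computation) a nontrivial $t\in\Rist_{\gg_\omega}(1^m)$ of length $O(2^m)$; conjugating $t$ along a depth-first traversal of a spanning tree of the level-$m$ Schreier graph produces $N=2^m$ elements supported on pairwise disjoint subtrees, reachable in order by a path of total length $O(N)$ (the conjugation costs telescope along the traversal). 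Disjointness of supports makes all $2^N$ subproducts distinct, so one gets $2^N$ distinct tuples in a ball of radius $O(N)$. Identifying and exploiting this quadratic gap between Nielsen length and word length is the missing idea.

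Two smaller points. For connectivity, your level-by-level reduction modulo $\Stab(k)$ has an unjustified termination step: aligning tuples modulo deeper and deeper level stabilizers requires unboundedly many Nielsen moves, and nothing in your sketch shows the process stabilizes. The paper instead quotes the computation $\gg_\omega/\Frat(\gg_\omega)\cong\zz_2^k$ with $k\le 3$ and applies Evans's theorem, which is exactly the tool that converts connectivity of $\Gamma_n$ of the Frattini quotient into connectivity of $\Gamma_n(\gg_\omega)$ for $n\ge 4$. Finally, you do not treat the case where $\omega$ is eventually constant, in which $\gg_\omega$ is virtually abelian, the rigid stabilizers degenerate, and one must argue separately (the paper uses an element of infinite order and the exponential growth of $\Gamma_2(\zz)$).
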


The techniques in this paper generalize fairly easily to several other
groups of intermediate growth, such as the Gupta--Sidki $p$-groups~\cite{gupta-sidki},
as well as large families of Grigorchuk $p$-groups. Many groups of intermediate
growth, such as the groups of oscillating growth defined in~\cite{kassabov_pak}, have, by
construction, some $\gg_\omega$ as a subgroup or a factor group. Such groups, then, also have
exponential Nielsen growth (see Proposition~\ref{prop:transfer_fast}).

In fact, the techniques in this paper apply to a general class of branch groups
defined in~\cite{bartholdi} called \emph{splitter-mixer groups}.
Many known group of intermediate growth appears to be based
on a splitter-mixer group. (An example that does not fall
into this class is given in~\cite{Nek} and analyzed in~\cite{monodromy},
but our techniques should apply there as well). The proofs will appear in~\cite{thesis}.

In summary, although we have yet to find proofs in all cases, we believe the Main Conjecture holds
for all \emph{known} constructions of groups of intermediate growth.  In that sense the situation
is similar to the ``$p_c < 1$'' conjecture by Benjamini and Schramm~\cite{BS} for groups of
superlinear growth.  The conjecture is known to hold for groups of exponential and polynomial
growth, and by an ad hoc argument for Grigorchuk groups and general self-similar groups~\cite{MP}.
It remains open for general groups of intermediate growth (see~\cite{Pete}).


Let us mention that in a followup paper~\cite{Mal}, the first author establishes
Conjecture~\ref{conj:non-amenable} for several classes of groups of exponential
growth, which include virtually solvable groups, linear groups,
random finitely presented groups (in Gromov sense), and hyperbolic groups.
He uses a technical extension of \emph{uniform exponential growth}
and \emph{uniform non-amenability} (see~\cite{uniform_nonamenability,BG,dlh2,Wil}).

Unfortunately, the explicit combinatorial approach in this paper,
does not seem to be strong enough to establish Conjecture~\ref{conj:non-amenable}
for the Grigorchuk group, which we state as a separate conjecture of independent
interest.

\begin{conj}\label{conj:grig-nonam}
Product replacement graphs $\Gamma_k(\gg)$ are non-amenable, for all $k\ge 5$.
\end{conj}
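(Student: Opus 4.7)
The plan is to strengthen the combinatorial branching construction that underlies Theorem~\ref{thm:gg_fast} into an honest Cheeger-type inequality. Exponential growth says that many Nielsen-neighbors of a tuple $T$ are distinct; non-amenability demands that for \emph{every} finite subset $S\ssu \Gamma_k(\gg)$, a definite fraction of vertices in $S$ have Nielsen-neighbors outside $S$. The first step is to upgrade the local branching used to prove exponential growth so that, starting from an arbitrary generating $k$-tuple $T$, one exhibits a binary-branching subtree of uniformly bounded depth $L$ embedded in $\Gamma_k(\gg)$, with the property that the leaves of two trees rooted at distinct $T, T'$ coincide only when $T, T'$ are already within distance $O(L)$ of each other in $\Gamma_k(\gg)$. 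A standard double-counting over the leaves then converts this into a uniform lower bound $|\partial S|/|S|\ge c > 0$.

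Concretely, I would exploit the self-similarity $\gg \hookrightarrow \gg \wr C_2$: a generating $k$-tuple of $\gg$ decomposes, after passing to $\Stab_\gg(1)$ via a bounded-depth sequence of Nielsen moves, into two $k$-tuples of $\gg$ acting on the two halves of the rooted binary tree $\Tbin$. A single Nielsen move on one coordinate of the ``upper'' tuple produces a tuple whose ``lower'' tuple is barely changed, and vice versa; this orthogonality is the mechanism by which two independent branches are created with controlled overlap. Iterating this construction at depth $L$ would yield the required tree, provided one can propagate the branching past tuples on which the decomposition behaves degenerately.

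An alternative route is via property $(\tau)$. The group $\gg$ is residually finite, with natural congruence quotients $\gg_n = \gg/\Stab_\gg(n)$, and the graphs $\Gamma_k(\gg_n)$ are finite quotients of (components of) $\Gamma_k(\gg)$. Showing that the family $\{\Gamma_k(\gg_n)\}_n$ is a family of expanders would imply non-amenability of $\Gamma_k(\gg)$ by a direct limit argument, following the framework outlined after Conjecture~\ref{conj:non-amenable}. This is in effect a $(\tau)$-statement for $\Aut(F_k)$ relative to the congruence subgroups pulled back from $\gg_n$.

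The main obstacle I expect is \emph{uniformity at degenerate tuples}. The construction in Theorem~\ref{thm:gg_fast} may produce only weak branching at ``narrow'' tuples---tuples in which many generators nearly coincide on the first few levels of $\Tbin$, so that descending to $\Stab_\gg(1)$ collapses the effective rank. A finite set $S$ can in principle concentrate precisely on such tuples, and a Cheeger-type bound requires ruling this out uniformly. Overcoming this likely requires a finer analysis than the word-combinatorics of the present paper: either a quantitative Kesten-type spectral estimate on the $\gg_n$, or a ``smoothing'' procedure that moves a degenerate tuple a bounded distance to a generic one without shrinking the ball $B_L(T)$. It is exactly this step that seems out of reach of the methods here, consistent with the authors' framing of the statement as a conjecture.
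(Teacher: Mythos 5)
This statement is not proved in the paper: it is stated as an open conjecture, and the authors explicitly say that their combinatorial approach ``does not seem to be strong enough'' to establish non-amenability for $\gg$. So there is no proof to compare against, and your proposal does not supply one either --- it is a research plan whose decisive step you yourself concede is missing. The gap is genuine and it is exactly where you locate it: exponential growth of balls (what the paper actually proves, via Lemmas~\ref{lem:cubic_fast} and~\ref{lem:short_rist_fast}) is strictly weaker than a uniform isoperimetric inequality. Amenable graphs of exponential growth exist --- the paper itself points to the lamplighter Cayley graph in $\S$\ref{ss:fin-amen} when discussing the still-weaker positive-speed conjecture --- so no amount of sharpening the ball-counting alone can close the gap.

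Concretely, both of your routes stall at known hard points. The cube construction in the paper builds $2^{2^m}$ elements along a path that starts from a \emph{specific redundant} tuple such as $(a,b,c,1,1)$, using the identity coordinate as an accumulator; to get a Cheeger bound you would need a uniformly branching structure rooted at \emph{every} generating $k$-tuple, with quantitative control on leaf collisions, and nothing in the paper's machinery produces this at tuples far from a redundant one (your ``degenerate tuples'' issue). Your second route, expansion of the congruence quotients $\Gamma_k(\gg/\Stab_\gg(n))$, is essentially a property-$(\tau)$ statement for $\Aut(F_k)$ relative to the pulled-back subgroups, which is the very open problem (cf.\ Conjecture~\ref{conj:non-amenable} and the discussion of $(T)$ and $(\tau)$ in the introduction) that motivates the conjecture in the first place. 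Your diagnosis of the obstruction is accurate and consistent with the authors' own framing, but the statement remains unproved.
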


\smallskip

The rest of this paper is structured as follows.  We begin with
basic definitions of growth of groups and the
product replacement graphs
(Section~\ref{s:def}).  In Section~\ref{s:basic} we present basic
results on the growth and connectivity of graphs $\Gamma_k(G)$; we
also present general tools for establishing the
exponential growth results. In a technical Section~\ref{s:binary}
we describe general tools and techniques for working with subgroups
$G \ssu \Aut(\mathbf{T}_2)$ and their product replacement graphs.
In the next two sections~\ref{sec:grigorchuk} and~\ref{s:gen_grig}
we establish the main result.  First, we prove the exponential
growth of $\Gamma_k(\gg)$ for $k\ge 5$; in this case the (technical)
argument is the most lucid.  We then generalize this approach to
\emph{all} Grigorchuk groups~$\gg_\om$.  We conclude with final
remarks and open problems (Section~\ref{s:fin}).

\bigskip


\section{Background and definitions}\label{s:def}

\subsection{Notation}

Let $X$ be a finite set. We write $\size{X}$ or $\abs{X}$ to denote the size of $X$. Throughout the paper we use $\zz_n$ to denote the cyclic group $\zz/n\zz$.

Let $\Gamma$ be a directed graph, which may have loops and repeated edges. We define $v \in \Gamma$ to mean that $v$ is a vertex of $\Gamma$. Let $v, w$ be vertices of~$\Gamma$. We write $v \edgeto w$ when there is an edge in $\Gamma$ from $v$~to~$w$, and $v \pathto w$ when there is a path in $\Gamma$ from $v$~to~$w$. We say $\Gamma$ is \emph{symmetric} if for every edge $v \edgeto w$ of $\Gamma$ there is an \emph{inverse edge} $w \edgeto v$. Every graph considered in this paper is a symmetric directed graph, unless otherwise specified. When convenient, we think of a symmetric directed graph as an undirected graph by identifying every edge with its inverse.

Let $G$ be a group, which may be finite or infinite. A \emph{generating $n$-tuple} of $G$ is an element $(g_1, \dots, g_n) \in G^n$, such that $G = \gen{g_1, \dots, g_n}$. Let $S = (g_1, \dots, g_n)$ be such an $n$-tuple. Consider a left action of $G$ on a set $X$. The \emph{Schreier graph} $\Schr_S(G,X)$ of this action with respect to $S$, is the directed graph whose vertices are the elements of $X$, with edges $x \edgeto g_ix$ and $x \edgeto g_i^{-1}x$ for each $x \in X$, and each $0 \leq i \leq n$. Note that each vertex in $\Schr_S(G,X)$ has $2n$ edges leaving it, and each edge $v \edgeto w$ in such a graph has an inverse edge $w \edgeto v$. Thus, $\Schr_S(G,X)$ is a $2n$-regular symmetric directed graph.

The \emph{Cayley graph} $\Cay_{S}(G)$ is the Schreier graph $\Schr_S(G,G)$ with respect to the left action of $G$ on itself by multiplication. Clearly, the Cayley graph $\Cay_{S}(G)$ is connected. Given $g \in G$, we define $\len_S(g)$ to be the length of the shortest path from $1$ to $g$ in the Cayley graph of $G$.

When the context makes it clear what the generating $n$-tuple $S$ is, we drop the subscript, and simply write $\Cay(G)$, $\Schr(G,X)$, and~$\len(g)$. We write $\Aut(G)$ for the group of automorphisms of $G$. We write $H < G$ when $H$ is a subgroup of~$G$, and $H \lneqq G$ when $H$ is a proper subgroup of~$G$. For an element~$g \in G$, denote by~$\ord(g)$ the order of~$g$. For $g_1, \dots, g_n \in G$, denote
\begin{align*}
\oaprod_{i=1\dots n} g_i \, = \, g_1 \cdots g_n.
\end{align*}

\subsection{Growth in graphs}

Let $\Gamma$ be a symmetric directed graph, and let $v \in \Gamma$. The ball of radius $r$ centered at $v$, denoted $B_\Gamma(v,r)$, is the set of vertices $w \in \Gamma$ such that there is a path of length at most $r$ between $v$~and~$w$. For example, suppose $\Gamma = \Cay_{S}(G)$. Then $B_\Gamma(1,r)$ consists of the elements $g \in G$ for which $\len_S(g) \leq r$.

We say $\Gamma$ has \emph{exponential growth from $v$}, if there is a constant $\al>1$, such that $\abs{B_\Gamma(v,r)} \geq \al^r$ for all $r$ (equivalently, for sufficiently large $r$). Suppose $\Gamma$ has exponential growth from $w$, and there is a path $v \pathto w$ in $\Gamma$. Then $\Gamma$ also has exponential growth from $v$. Thus, if $\Gamma$ is connected and has exponential growth from some $v \in \Gamma$, it also has exponential growth from any $w \in \Gamma$. In this case, we say that $\Gamma$ has \emph{exponential growth}.

\subsection{Growth in groups}

Let $G$ be a group, Let $S$ be a generating $n$-tuple of~$G$. Define~$B_{G,S}(r) = B_\Gamma(1,r)$, where ${\Gamma = \Cay_{S}(G)}$. When it is clear what $S$ is, we simply write~$B_G(r)$ instead. It is easy to verify that the following definitions are independent of the choice of generators $S$.

We say $G$ has \emph{exponential growth} if $\Gamma$ has exponential growth. In other words, $G$ has exponential growth if there is a constant $\al > 1$ such that $\abs{B_G(r)} \geq \al^r$ for sufficiently large $r$. Equivalently  $G$ has exponential growth if and only if
\begin{align*}
\liminf_{r\ra \infty} \, \frac{\log\abs[\big]{B_G(r)}}{r} &
> 0.
\end{align*}

Similarly, we say $G$ has \emph{polynomial growth} if there is a constant $d$ with $\abs{B_G(r)} \leq r^d$ for sufficiently large~$r$. In other words, $G$ has polynomial growth if
\begin{align*}
\limsup_{r\ra \infty} \, \frac{\log \abs[\big]{B_G(r)}}{\log r} &
< \infty.
\end{align*}

\begin{Example}
The group $\zz$ has polynomial growth. With respect to the generating $1$-tuple $S = (1)$, we have $B_{\zz}(r) = [-r, r]$, and hence $\abs{B_{\zz}(r)} = 2r + 1$.
\end{Example}

\begin{Example}
The free group with two generators, $G = F_2 = \gen{a,b}$ has exponential growth. With respect to the generators $S = (a,b)$, we have $\abs{B_{G}(r)} = 1+4\cdot 3^{r-1}$ for~$r \geq 1$.
\end{Example}

We say $G$ has \emph{intermediate growth} if it has neither exponential nor polynomial growth. The first known example of a group of intermediate growth is the Grigorchuk group~$\gg$, which will be defined later, in Section~\ref{sec:grigorchuk}. We refer to~\cite[$\S$VI]{de_la_harpe} and~\cite{growthintro} for more on the growth of groups. 

\subsection{Product replacement graphs}

Given a generating $n$-tuple of $S$ a group $G$, we can take an element of $S$ and multiply it, either on the left or the right, by another element or another element's inverse. Such an operation is called a \emph{Nielsen move}. Formally, for each $1 \leq i,j \leq n$ with $i \neq j$, we define the Nielsen moves $R_{ij}^{\pm 1}$, $L_{ij}^{\pm1}$ by
\begin{align*}
R_{ij}^{\pm 1}(g_1, \dots, g_i, \dots, g_j, \dots g_n) & = (g_1, \dots, g_i, \dots, g_jg_i^{\pm 1}, \dots, g_n),
\\
\text{ and } \quad
L_{ij}^{\pm 1}(g_1, \dots, g_i, \dots, g_j, \dots g_n) & = (g_1, \dots, g_i, \dots, g_i^{\pm 1} g_j, \dots, g_n).
\end{align*}
Clearly, if $S$ is a generating $n$-tuple of $G$, then $R_{ij}S$, $R_{ij}^{-1}S$, $L_{ij}S$, and~$L_{ij}^{-1}S$ are also generating $n$-tuples of $G$.

We define the \emph{product replacement graph} $\Gamma_n(G)$ to be the directed graph whose vertices are the generating $n$-tuples of $G$, where there is an edge from $S$ to $R_{ij}S$, $R_{ij}^{-1}S$, $L_{ij}S$, and $L_{ij}^{-1}S$, for each generating $n$-tuple $S$ and each pair of integers $i \neq j$ satisfying~$1 \leq i,j \leq n$. This is a $4n(n-1)$-regular symmetric directed graph.

Observe that
\begin{align*}
R_{ij}L_{ji}^{-1}L_{ij}(g_1, \dots, g_i, \dots, g_j ,\dots, g_n)
\, &= \,
R_{ij}L_{ji}^{-1}(g_1, \dots, g_i, \dots, g_i g_j ,\dots, g_n)
\\ = \,
R_{ij}(g_1, \dots, g_j^{-1}, \dots, g_i g_j ,\dots, g_n) \, &= \,
(g_1, \dots, g_j^{-1}, \dots, g_i ,\dots, g_n).
\end{align*}
Hence, a series of Nielsen moves can swap two elements in a generating $n$-tuple, inverting one of them. Doing this twice simply inverts both elements. This implies that Nielsen moves permit us to rearrange generators in an $n$-tuple, except that we may need to invert one element (see \cite{whatprp}). Moreover, if $g_i = 1$ for some $i$, then we can use Nielsen moves invert any one element, and therefore we can rearrange the generators freely.

\begin{Example}\label{ex:Gamma_2_zz}
The graph $\Gamma_2(\zz)$ has a vertex for each pair of relatively prime integers $(a,b)$,
with two edges from $(a,b)$ to each of $(a,b+a)$, $(a,b-a)$, $(a+b,b)$ and~$(a-b,b)$.
It is easy to check that this graph has exponential growth: the subgraph induced by $\setc{ (a,b) \in \zz^2 }{a, b > 0,\;\gcd(a,b) = 1}$ is a rooted binary tree.
\end{Example}

\begin{Example}
Let $G = \zz_p^n$, with $p$ prime. Then $\Gamma_n(G)$ is the set of bases of $\zz_p^n$ as a vector space over~$\zz_p$. These bases are in one-to-one correspondence with matrices in $\GL_n(\zz_p)$, and Nielsen moves correspond to elementary row operations. Row operations do not change the determinant of a matrix. It follows that there is one connected component for every value of the determinant. This implies that $\Gamma_n(\zz_p^n)$ has $p-1$ connected components (see \cite{diaconis1999graph}).
\end{Example}

\subsection{Growth of $\Gamma_n(G)$}

Let $S = (g_1, \dots, g_n) \in \Gamma_n(G)$. We write
\begin{align*}
S^{(m)}
\deq
(g_1, \dots, g_n,1,\dots,1) \in \Gamma_{n+m}(G),
\end{align*}
and define $\Gamma_{n+m}(G,S)$ to be the connected component of $\Gamma_{n+m}(G)$ containing~$S^{(m)}$.

We say $G$ has \emph{exponential Nielsen growth} if $\Gamma_n(G,S)$ has exponential growth for some $n$ and some generating $n$-tuple $S$ of $G$.
It is easy to show that a finitely generated group~$G$ has exponential Nielsen growth if $G$ is either an infinite group of polynomial growth, or a group of exponential growth (see~Proposition~\ref{prop:poly_exp_fast}). This suggests that every infinite finitely generated group has exponential Nielsen growth:

\begin{conj}\label{conj:inf_fast}
For every infinite finitely generated group $G$, there is an generating $n$-tuple $S \in \Gamma_n(G)$ such that $\Gamma_n(G,S)$ has exponential growth.
\end{conj}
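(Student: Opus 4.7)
The natural approach splits the argument by the growth type of~$G$. When $G$ has polynomial growth, Gromov's theorem gives that $G$ is virtually nilpotent, and one reduces to a torsion-free nilpotent subgroup of finite index. There one exhibits a binary tree inside $\Gamma_n(G,S)$ by imitating Example~\ref{ex:Gamma_2_zz}: if two coordinates $g_i, g_j$ of a padded tuple $S^{(m)}$ generate a subgroup admitting $\zz^2$ (or a Heisenberg-type group) as a quotient, then iterating the Nielsen moves $L_{ij}^{\pm 1}$ and $R_{ij}^{\pm 1}$ produces a rooted binary subtree in $\Gamma_{n+m}(G,S)$, and this suffices for exponential growth.

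When $G$ has exponential growth, pad $S$ with a trivial generator. A single Nielsen move copies any $g_i$ into the trivial slot, after which further left- and right-multiplications in that slot realize any word of length~$r$ in the $g_i$'s using $O(r)$ Nielsen moves. Distinct group elements produce distinct tuples, so a ball of radius $O(r)$ in $\Gamma_{n+1}(G,S)$ surjects onto $B_G(r)$, which is exponential. Together these two cases comprise Proposition~\ref{prop:poly_exp_fast}, as promised in the text.

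The genuinely hard case is intermediate growth, which enjoys no structural hypothesis analogous to Gromov's theorem. The plan adopted in the present paper is to identify a self-similar branching structure common to known intermediate-growth constructions---the splitter-mixer structure of~\cite{bartholdi}---and to use it to build an explicit binary tree inside $\Gamma_n(G,S)$. For the Grigorchuk groups~$\gg_\omega$ this amounts to producing two ``independent'' tuple transformations whose orbits form a free semigroup action on a well-chosen set of tuples; this is carried out as Theorem~\ref{thm:gg_fast}.

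The main obstacle to Conjecture~\ref{conj:inf_fast} in full generality is precisely that intermediate growth is not known to imply any such structural feature. Any fully general proof would seem to require either a classification of the mechanisms producing intermediate growth, or a soft property---say a uniform version of subexponential decay of coset growth, or a weakened property~$(\tau)$---strong enough on its own to force exponential growth of the Schreier graph of $\Aut(F_n)$ acting on generating tuples. Absent such a tool, the realistic strategy is to extend the splitter-mixer technique example by example, as is done in~\cite{thesis} and complemented by the uniform non-amenability approach of~\cite{Mal} for several classes of exponential-growth groups.
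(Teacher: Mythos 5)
You have correctly recognized that the statement is a conjecture which the paper leaves open: no complete proof exists, and your ``proposal'' is rightly only a plan covering the cases the paper actually settles. For those cases your outline matches the paper's route in substance. The exponential-growth case is exactly Proposition~\ref{prop:poly_exp_fast}: pad $S$ by one trivial slot and realize each $g \in B_{G,S}(r)$ there in at most $r$ Nielsen moves, so $\abs{B_\Gamma(S^{(1)},r)} \geq \abs{B_{G,S}(r)}$. For polynomial growth your route is slightly heavier than the paper's: after Gromov's theorem the paper does not need $\zz^2$ or Heisenberg quotients of two coordinates, only an element of infinite order, whence a subgroup isomorphic to $\zz$; then Example~\ref{ex:Gamma_2_zz} (the binary tree in $\Gamma_2(\zz)$) and Proposition~\ref{prop:supergroup_fast} embed $\Gamma_2(\zz)$ into $\Gamma_{n+2}(G,S)$ (this is Lemma~\ref{lem:infinite_order_fast}). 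Your version would work but proves more than is needed. For the intermediate-growth case your description of the Grigorchuk mechanism as ``two independent tuple transformations generating a free semigroup action'' is not quite what the paper does: the actual engine is Lemma~\ref{lem:short_rist_fast} combined with Lemma~\ref{lem:cubic_fast} --- one finds a nontrivial element of $\Rist_{\gg_\omega}(1^m)$ of length $O(2^m)$, conjugates it along a spanning tree of the level-$m$ Schreier graph to get $2^m$ elements with disjoint supports, and the resulting cubic $2^m$-tuple yields $2^{2^m}$ distinct tuples within distance $O(2^m)$. Your closing assessment --- that full generality would require either structural control of intermediate growth or a soft property such as $(\tau)$ for $\Aut(F_n)$ --- is consistent with the paper's own framing of Conjectures~\ref{conj:main} and~\ref{conj:non-amenable}.
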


Note that this conjecture is a weaker version of Conjecture~\ref{conj:non-amenable}.
Here we accounted for the possibility that there can be many connected components, and are working with only
one of them.  Our Main Conjecture~\ref{conj:main} is also stronger; implicit in it is a reference to a
conjecture that every generating $k$-tuple is connected to a redundant generating $k$-tuple in $\Gamma_k(G)$.
For this and stronger conjectures on connectivity of $\Gamma_k(G)$, see~\cite{whatprp} (see also~\cite{BKM}).

\medskip

\section{Basic results}\label{s:basic}

\subsection{Growth of graphs}
We do not need to prove that $B_\Gamma(v,r)$ is large for every single $r$ to conclude that $\Gamma$ has exponential growth from~$v$. As the following lemma shows, it suffices to prove it for a relatively sparse set of numbers~$r$.

A sequence of positive integers $r_1, r_2, \dots$ is called \emph{log-dense} if it is increasing, and there is a constant $\be$ such that $r_{i+1} \leq \be r_i$ for every~$i\geq 1$. In other words, an increasing integer sequence $(r_i)$ is log-dense if the gaps in the sequence $(\log r_i)$ are bounded above.

\begin{lemma}\label{lem:sparse_exp}
Let $\Gamma$ be a symmetric directed graph, and let~$v$ be a vertex of $\Gamma$. Suppose that for some constant $\al > 1$, there is a log-dense sequence $r_1, r_2, \dots$ such that~$\abs{B(v,r_i)} \geq \al^{r_i}$ for every $i \geq 1$. Then $\Gamma$ has exponential growth from~$v$.
\end{lemma}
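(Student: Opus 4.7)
The plan is to produce, from the given log-dense sequence, a single constant $\al' > 1$ such that $\abs{B(v,r)} \ge (\al')^r$ for all sufficiently large~$r$. The key observation is that ball sizes are monotone in the radius: if $r \ge s$ then $B(v,s) \ssu B(v,r)$, so $\abs{B(v,r)} \ge \abs{B(v,s)}$. Thus, to lower-bound $\abs{B(v,r)}$ it suffices to find some $r_i \le r$ in the log-dense sequence that is not much smaller than~$r$.

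First I would fix the log-density constant $\be \ge 1$ guaranteed by the hypothesis, so that $r_{i+1} \le \be r_i$ for all $i \ge 1$. Given any $r \ge r_1$, let $i$ be the largest index with $r_i \le r$. If $r_i = r$ we are done with $r_i \ge r/\be$ trivially; otherwise $r < r_{i+1} \le \be r_i$, which likewise yields $r_i > r/\be$. In either case we get $r_i \ge r/\be$.

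Combining monotonicity with the hypothesis $\abs{B(v,r_i)} \ge \al^{r_i}$ gives
\begin{align*}
\abs{B(v,r)} \; \ge \; \abs{B(v,r_i)} \; \ge \; \al^{r_i} \; \ge \; \al^{r/\be} \; = \; \bigl(\al^{1/\be}\bigr)^{r}.
\end{align*}
Setting $\al' = \al^{1/\be} > 1$ then gives the desired bound for all $r \ge r_1$, i.e. for sufficiently large~$r$, which is exactly the definition of exponential growth from~$v$.

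There is no substantial obstacle here; the lemma is a packaging result that trades an unspecified covering constant ($\al$ restricted to the sparse set) for a slightly smaller uniform constant ($\al^{1/\be}$) at the cost of the log-density parameter. The only place one has to be slightly careful is ensuring $r \ge r_1$ so that an index $i$ with $r_i \le r$ exists, which is harmless since exponential growth is a statement about large~$r$.
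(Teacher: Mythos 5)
Your proof is correct and follows essentially the same argument as the paper: pick the largest $r_i \leq r$, use log-density to get $r_i \geq r/\be$, and conclude $\abs{B(v,r)} \geq \al^{r/\be}$ by monotonicity of ball sizes. The paper's version is just a more compressed statement of exactly this chain of inequalities.
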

\begin{proof}
Since $r_i$ is an increasing sequence of positive integers, we can conclude that for sufficiently large~$r$, there is an $i$ with~$r_i \leq r \leq r_{i+1}$. Since $r_{i+1} \leq \be r_i$, we have~$r_i \geq r/\be$. Thus,
\begin{align*}
\abs{B(v,r)} &
\geq \abs{B(v,r_i)}
\geq \al^{r_i}
\geq \al^{r/\be},
\end{align*}
which implies the result.
\end{proof}

If a graph $\Gamma$ is a covering of another graph $\Gamma'$, and $\Gamma'$ has exponential growth, then so does~$\Gamma$.

\begin{prop}\label{prop:graph_transfer_covering} Let $\Gamma'$ and $\Gamma$ be symmetric directed graphs, and suppose $\phi: \Gamma' \ra \Gamma$ maps the set of neighbors of each vertex $v \in \Gamma'$ surjectively onto the neighbors of~$\phi(v)$. Suppose $\Gamma$ has exponential growth from~$\phi(w)$. Then $\Gamma'$ has exponential growth from~$w$.
\end{prop}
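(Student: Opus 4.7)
The plan is to show that the local-surjectivity hypothesis lets us lift paths from $\Gamma$ to $\Gamma'$, which immediately transfers a lower bound on ball sizes, hence exponential growth.

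First I would establish the path-lifting lemma: for every path $\phi(w) = v_0 \edgeto v_1 \edgeto \dots \edgeto v_r$ in $\Gamma$ starting at $\phi(w)$, there exists a path $w = w_0 \edgeto w_1 \edgeto \dots \edgeto w_r$ in $\Gamma'$ with $\phi(w_i) = v_i$ for each $i$. This is proved by induction on $r$. The base case $r=0$ is trivial with $w_0 = w$. For the inductive step, assume $w_0,\dots,w_i$ have been chosen so that $\phi(w_i) = v_i$. Since $v_{i+1}$ is a neighbor of $v_i = \phi(w_i)$, the hypothesis that $\phi$ maps the neighbors of $w_i$ surjectively onto the neighbors of $\phi(w_i)$ supplies a neighbor $w_{i+1}$ of $w_i$ in $\Gamma'$ with $\phi(w_{i+1}) = v_{i+1}$, completing the lift.

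Next I would use the lift to compare ball sizes. If $v \in B_\Gamma(\phi(w),r)$, fix any path of length at most $r$ from $\phi(w)$ to $v$ in $\Gamma$ and lift it to a path of the same length from $w$ in $\Gamma'$; its endpoint lies in $B_{\Gamma'}(w,r)$ and maps to $v$. Hence $\phi$ restricts to a surjection $B_{\Gamma'}(w,r) \twoheadrightarrow B_\Gamma(\phi(w),r)$, which gives
\begin{align*}
\abs{B_{\Gamma'}(w,r)} \;\geq\; \abs{B_\Gamma(\phi(w),r)}.
\end{align*}

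Finally, exponential growth of $\Gamma$ from $\phi(w)$ provides a constant $\al > 1$ with $\abs{B_\Gamma(\phi(w),r)} \geq \al^r$ for all sufficiently large $r$, and combining this with the previous inequality yields $\abs{B_{\Gamma'}(w,r)} \geq \al^r$, proving exponential growth of $\Gamma'$ from $w$. There is no genuine obstacle here; the only point requiring any care is to notice that the hypothesis is exactly the ``local lifting'' property needed to make the inductive step of path lifting go through, with no appeal to the stronger notion of a covering map (injectivity on neighbors is not needed).
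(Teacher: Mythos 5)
Your proof is correct and is essentially the same argument as the paper's: both use the local surjectivity hypothesis in an induction to show that $\phi$ maps $B_{\Gamma'}(w,r)$ onto $B_{\Gamma}(\phi(w),r)$, hence $\abs{B_{\Gamma'}(w,r)} \geq \abs{B_{\Gamma}(\phi(w),r)}$. You package the induction as a path-lifting lemma while the paper inducts directly on the ball radius, but the inductive step is identical in substance.
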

\begin{proof}It suffices to show that $\phi$ maps $B_{\Gamma'}(w,r)$ onto~$B_{\Gamma}(\phi(w),r)$ for all $r\geq 0$, since in that case~
\begin{align*}
\abs{B_{\Gamma'}(w,r)} &
\geq \abs{B_{\Gamma}(\phi(w),r)}.
\end{align*}
We prove this by induction on $r$. The base case $r = 0$ is trivial. Suppose
\begin{align*}
\phi\big(B_{\Gamma'}(w,r)\big) \supseteq B_{\Gamma}\big(\phi(w),r\big),
\end{align*}
and consider~$v \in B_{\Gamma}(\phi(w),r+1)$. We know that $v$ has a neighbor $u \in B_{\Gamma}(\phi(w),r)$, which has a preimage~$u' \in B_{\Gamma'}(w,r)$. Since $v$ is a neighbor of $u$, we know that some neighbor of~$u'$ is mapped to~$v$. Therefore, $v \in \phi\big(B_{\Gamma'}(w,r+1)\big)$, as desired.
\end{proof}

It is easy to see that if a graph $\Gamma$ is a subgraph of $\Gamma'$, and $\Gamma$ has exponential growth, so does $\Gamma'$. Moreover, we have the following stronger result:

\begin{prop} \label{prop:graph_transfer_near-injective}
Let $\Gamma$ and $\Gamma'$ be symmetric directed graphs, and suppose $\phi: \Gamma \ra \Gamma'$ sends neighbors to neighbors. Suppose that there is a constant $C$ such that $\size{\phi^{-1}(v')} \leq C$ for every vertex~$v' \in \Gamma'$. Suppose that $\Gamma$ has exponential growth from~$w$. Then $\Gamma'$ has exponential growth from~$\phi(w)$.
\end{prop}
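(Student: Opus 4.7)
The plan is to push the exponential lower bound on balls in $\Gamma$ forward along $\phi$, losing only a bounded multiplicative constant which can then be absorbed into a slightly smaller growth rate. First I would verify that $\phi$ is distance non-increasing: an induction on $r$, using that $\phi$ sends neighbors to neighbors, gives
\[
\phi\bigl(B_\Gamma(w,r)\bigr) \;\subseteq\; B_{\Gamma'}\bigl(\phi(w),r\bigr)
\]
for every $r \geq 0$. This is the mirror image of the inductive step in Proposition~\ref{prop:graph_transfer_covering}, only easier, since here I only need containment of the image in the target ball rather than surjectivity onto it.

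Next, the hypothesis $\size{\phi^{-1}(v')} \leq C$ for every $v' \in \Gamma'$ implies that $\size{\phi(X)} \geq \abs{X}/C$ for any finite $X \subseteq \Gamma$, because each point of $\phi(X)$ has at most $C$ preimages in $X$. Applying this to $X = B_\Gamma(w,r)$ and combining with the previous step yields
\[
\abs{B_{\Gamma'}(\phi(w),r)} \;\geq\; \abs[\big]{\phi\bigl(B_\Gamma(w,r)\bigr)} \;\geq\; \frac{\abs{B_\Gamma(w,r)}}{C}.
\]

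Finally, by hypothesis there is $\al > 1$ with $\abs{B_\Gamma(w,r)} \geq \al^r$ for all sufficiently large $r$. Fix any $\al'$ with $1 < \al' < \al$; for $r$ large enough that $(\al/\al')^r \geq C$, we have $\al^r / C \geq (\al')^r$, and hence $\abs{B_{\Gamma'}(\phi(w),r)} \geq (\al')^r$ for all sufficiently large $r$. This gives exponential growth of $\Gamma'$ from $\phi(w)$.

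There is no substantive obstacle in this argument; the only subtlety is that the factor $1/C$ forces me to weaken the base of exponential growth slightly, which is entirely routine. In spirit this is the ``bounded-to-one'' counterpart of Proposition~\ref{prop:graph_transfer_covering}, packaged so that a subgraph with exponential growth (the special case $C = 1$) immediately passes to the ambient graph.
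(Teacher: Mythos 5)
Your proposal is correct and follows essentially the same route as the paper: an induction on $r$ showing $\phi\bigl(B_\Gamma(w,r)\bigr) \subseteq B_{\Gamma'}\bigl(\phi(w),r\bigr)$, followed by the bounded-fiber estimate $\abs{B_{\Gamma'}(\phi(w),r)} \geq \abs{B_\Gamma(w,r)}/C$. The only difference is that you spell out the routine absorption of the constant $C$ into a slightly smaller base $\al'$, which the paper leaves implicit.
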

\begin{proof}It suffices to show that $\phi$ maps $B_\Gamma(w,r)$ into $B_{\Gamma'}(\phi(w),r)$ for all $r \geq 0$, since in that case
\begin{align*}
\abs{B_{\Gamma'}(\phi(w),r)}
&\geq\abs{B_{\Gamma}(w,r)}/C.
\end{align*}
We prove this by induction or~$r$. The base case $r = 0$ is trivial. Suppose
\begin{align*}
\phi\big(B_\Gamma(w,r)\big) &\subseteq B_{\Gamma'}\big(\phi(w),r\big),
\end{align*}
and consider $v \in B_\Gamma(w,r+1)$. We know that $v$ has a neighbor $u \in B_\Gamma(w,r)$, and~$\phi(u) \in B_{\Gamma'}(\phi(w),r)$. Since $u$ and $v$ are neighbors, and $\phi$ sends neighbors to neighbors, we see that $\phi(v)$ is a neighbor of~$\phi(u)$. It follows that ${\phi(v) \in B_{\Gamma'}(\phi(w),r+1)}$, as desired.
\end{proof}

\subsection{Growth of product replacement graphs}

Observe that if $m \geq n$ then $\Gamma_n(G,S)$ embeds into $\Gamma_m(G,S)$. Therefore, by Lemma~\ref{prop:graph_transfer_near-injective} if $\Gamma_n(G,S)$ has exponential growth, so does $\Gamma_m(G,S)$.

Moreover, if $H$ is a finitely generated subgroup of $G$, then every product replacement graph of $H$ embeds in some product replacement graph of $G$. We can conclude that if a subgroup of $G$ has a product replacement graph of exponential growth, so does $G$. Formally:

\begin{prop}\label{prop:supergroup_fast}
Let $H$ and $G$ be finitely generated groups with~${H < G}$. Suppose some connected component of $\Gamma_m(H)$ has exponential growth, and let $S \in \Gamma_n(G)$. Then $\Gamma_{n+m}(G,S)$ has exponential growth. In particular, if $H < G$ and $H$ has exponential Nielsen growth, then $G$ also has exponential Nielsen growth.
\end{prop}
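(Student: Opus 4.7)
The plan is to embed the exponential-growth component of $\Gamma_m(H)$ into $\Gamma_{n+m}(G,S)$ as an injective graph homomorphism with singleton fibers, and then apply Proposition~\ref{prop:graph_transfer_near-injective} with $C = 1$.

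Fix a generating $m$-tuple $T = (h_1, \dots, h_m)$ of $H$ lying in the hypothesized exponential-growth component of $\Gamma_m(H)$, write $S = (g_1, \dots, g_n)$, and consider the map
\[
\phi: \Gamma_m(H,T) \lar \Gamma_{n+m}(G), \qquad T' = (h_1', \dots, h_m') \longmapsto (g_1, \dots, g_n, h_1', \dots, h_m').
\]
Each $\phi(T')$ generates $G$ because $S$ already does, and $\phi$ is obviously injective. A Nielsen move on the last $m$ coordinates of $\phi(T')$ in $\Gamma_{n+m}(G)$ is exactly the index-shifted copy of a Nielsen move on $T'$ in $\Gamma_m(H)$, so $\phi$ sends neighbors to neighbors.

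The only step that requires attention, and the place I expect to be the main (mild) obstacle, is checking that $\phi(T) = (g_1, \dots, g_n, h_1, \dots, h_m)$ actually lies in the component $\Gamma_{n+m}(G,S)$ of $S^{(m)} = (g_1, \dots, g_n, 1, \dots, 1)$; once this is done, the image of any other $T' \in \Gamma_m(H,T)$ is joined to $\phi(T)$ by the index-shifted path of Nielsen moves linking $T'$ to $T$ in $\Gamma_m(H)$, so the whole image of $\phi$ sits inside $\Gamma_{n+m}(G,S)$. To build a path from $S^{(m)}$ to $\phi(T)$, for each $i$ fix a word expression $h_i = g_{j_1}^{\e_1} \cdots g_{j_k}^{\e_k}$ (which exists because $H < G = \gen{g_1,\dots,g_n}$) and realize it one letter at a time in position $n+i$ via the right-Nielsen moves $R_{j_1, n+i}^{\e_1}, \dots, R_{j_k, n+i}^{\e_k}$. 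Each such move modifies only coordinate $n+i$, so positions $1,\dots,n$ remain equal to $g_1, \dots, g_n$ throughout and the $h_1, \dots, h_{i-1}$ already installed in positions $n+1, \dots, n+i-1$ are left untouched. Running this for $i = 1, \dots, m$ in order carries $S^{(m)}$ to $\phi(T)$.

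Applying Proposition~\ref{prop:graph_transfer_near-injective} to $\phi$ then transfers exponential growth of $\Gamma_m(H,T)$ to exponential growth of $\Gamma_{n+m}(G,S)$ at $\phi(T)$, which is exponential growth of $\Gamma_{n+m}(G,S)$ by connectedness of the component. For the \emph{in particular} clause, if $H$ has exponential Nielsen growth then some $\Gamma_m(H)$ contains a component of exponential growth; applying the first part to any generating $n$-tuple $S$ of $G$ gives exponential growth of $\Gamma_{n+m}(G,S)$, so $G$ has exponential Nielsen growth as well.
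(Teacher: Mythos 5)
Your proposal is correct and follows essentially the same route as the paper: the same embedding $\phi(h_1',\dots,h_m') = (g_1,\dots,g_n,h_1',\dots,h_m')$, transfer of exponential growth along it, and a word-by-word Nielsen path from $S^{(m)}$ to $\phi(T)$ using that the $g_i$ generate $G$. You merely make explicit two points the paper leaves implicit, namely the invocation of Proposition~\ref{prop:graph_transfer_near-injective} with $C=1$ and the letter-by-letter construction of the connecting path.
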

\begin{proof}
Let~$S = (g_1, \dots, g_n)\in \Gamma_n(G)$. We know that $\Gamma_m(H)$ has exponential growth from some $T \in \Gamma_m(H)$. Let $T = (h_1, \dots, h_m)$. There is a graph embedding $\phi: \Gamma_m(H)\ra\Gamma_{n+m}(G)$ given by
\begin{align*}
\phi(h_1', \dots, h_m') = ( g_1, \dots, g_n, h_1', \dots, h_m').
\end{align*}
Hence, $\Gamma_{n+m}(G)$ has exponential growth from~$\phi(T)$. Since the $g_i$'s generate $G$, we know that each $h_i$ is a product of $g_i$'s and their inverses. Thus, there is a sequence of Nielsen moves $S^{(m)} \pathto \phi(T)$, where
\begin{align*}
S^{(m)} &= (g_1, \dots, g_n, 1, \dots, 1),
\text{ and }
\phi(T) = (g_1, \dots, g_n, h_1, \dots, h_m).
\end{align*}
Therefore, $\Gamma_{n+m}(G,S) = \Gamma_{n+m}\big(G,\phi(T)\big)$, which implies that $\Gamma_{n+m}(G,S)$ has exponential growth.
\end{proof}

Similarly, we can show that if a group quotient of $G$ has a product replacement graph of exponential growth, then so does $G$.

\begin{prop}\label{prop:lift_fast}
Let $G$ and $H$ be finitely generated groups, and let $f:G \ra H$ be a surjective group homomorphism. Let $S\in \Gamma_n(G)$. Then the following hold.
\begin{enumerate}[{\normalfont (1)}]
\item \label{prop:lift_fast:S} Suppose $\Gamma_n\big(H,f(S)\big)$ has exponential growth. Then $\Gamma_n(G,S)$ has exponential growth.
\item \label{prop:lift_fast:S+} Suppose some connected component of $\Gamma_m(H)$ has exponential growth. Then $\Gamma_{n+m}(G,S)$ has exponential growth.
\item \label{prop:lift_fast:} Suppose $H$ has exponential Nielsen growth. Then $G$ also has exponential Nielsen growth.
\end{enumerate}
\end{prop}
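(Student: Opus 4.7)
The plan is to exploit the fact that a surjective homomorphism $f: G \to H$ induces a well-behaved map between product replacement graphs. Define $\wh f: \Gamma_n(G) \to \Gamma_n(H)$ componentwise by $\wh f(g_1, \dots, g_n) = (f(g_1), \dots, f(g_n))$; surjectivity of $f$ makes this well-defined on generating tuples, since $\gen{f(g_1),\dots,f(g_n)} = f(G) = H$. The crucial property is that Nielsen moves are defined uniformly in any group, so $\wh f$ commutes with each $R_{ij}^{\pm 1}$ and each $L_{ij}^{\pm 1}$. Consequently, $\wh f$ sends the neighbors of any $T \in \Gamma_n(G)$ \emph{surjectively} onto the neighbors of $\wh f(T)$ in $\Gamma_n(H)$, placing us in the exact situation of Proposition~\ref{prop:graph_transfer_covering}.

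For part~\eqref{prop:lift_fast:S}, the intertwining of $\wh f$ with Nielsen moves forces $\wh f$ to map the connected component $\Gamma_n(G,S)$ into $\Gamma_n(H, f(S))$, and the surjectivity on neighborhoods survives this restriction. Proposition~\ref{prop:graph_transfer_covering}, applied with $\phi = \wh f$, $\Gamma' = \Gamma_n(G,S)$, $\Gamma = \Gamma_n(H, f(S))$, and $w = S$, then yields exponential growth of $\Gamma_n(G,S)$ from $S$.

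For part~\eqref{prop:lift_fast:S+}, the strategy is to first produce a fast-growth component on the $H$-side at the enlarged tuple length, and then lift. Since $f(S)$ is itself a generating $n$-tuple of $H$, Proposition~\ref{prop:supergroup_fast}, applied with the trivial inclusion $H < H$, the tuple $f(S) \in \Gamma_n(H)$, and the hypothesized fast component of $\Gamma_m(H)$, shows that $\Gamma_{n+m}(H, f(S))$ has exponential growth. Since $\wh f(S^{(m)}) = f(S)^{(m)}$ lies in this same component, part~\eqref{prop:lift_fast:S} (applied to $S^{(m)}$ in place of $S$, with $n+m$ in place of $n$) gives exponential growth of $\Gamma_{n+m}(G, S^{(m)}) = \Gamma_{n+m}(G, S)$. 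Part~\eqref{prop:lift_fast:} is then immediate: exponential Nielsen growth of $H$ means some component of some $\Gamma_m(H)$ grows exponentially, so part~\eqref{prop:lift_fast:S+} applied to any generating tuple $S$ of $G$ produces a fast component of $\Gamma_{n+m}(G)$.

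I do not anticipate a genuine obstacle; the statement is a formal consequence of Propositions~\ref{prop:graph_transfer_covering} and~\ref{prop:supergroup_fast} once one records the compatibility of $\wh f$ with Nielsen moves. The only point that needs care is the bookkeeping of connected components — namely, verifying that $\wh f$ restricts appropriately and that the neighbor-surjectivity passes to the restricted map — and both facts follow directly from the same commutation observation.
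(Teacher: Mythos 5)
Your proposal is correct and follows essentially the same route as the paper: part~(\ref{prop:lift_fast:S}) via the componentwise extension of $f$, its compatibility with Nielsen moves, and Proposition~\ref{prop:graph_transfer_covering}; part~(\ref{prop:lift_fast:S+}) by concatenation with identity padding and a reduction to part~(\ref{prop:lift_fast:S}). The only (harmless) difference is in part~(\ref{prop:lift_fast:S+}): the paper chooses explicit lifts $\tilde h_i$ of the generators of $H$, applies part~(\ref{prop:lift_fast:S}) to the concatenated tuple in $G$, and then connects it to $S^{(m)}$ by a path in $\Gamma_{n+m}(G)$, whereas you perform the concatenation and connecting path entirely on the $H$-side (via Proposition~\ref{prop:supergroup_fast} with $H<H$) and then lift once at $S^{(m)}$ -- a slightly cleaner bookkeeping of the same argument.
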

\begin{proof}For (\ref{prop:lift_fast:S}), we extend $f$ to a map $\Gamma_n(G) \ra \Gamma_n(H)$ by making the following definition.
\begin{align*}
f(g_1, \dots, g_n) = \big(f(g_1), \dots, f(g_h)\big).
\end{align*}
This map $f$ sends the neighbors of every $T\in \Gamma_n(G)$ surjectively onto the neighbors of $f(T)$. Thus, since $\Gamma_n(H)$ has exponential growth from $f(S)$, we can apply Proposition~\ref{prop:graph_transfer_covering}, and conclude that $\Gamma_n(G)$ has exponential growth from $S$.

For (\ref{prop:lift_fast:S+}), let $S = (g_1, \dots, g_n) \in \Gamma_n(G)$, and choose
\begin{align*}
T &
= (h_1, \dots, h_m)
= \big(f(\tilde h_1), \dots, f(\tilde h_m)\big) \in \Gamma_m(H)
\end{align*}
such that $\Gamma_m(H,T)$ has exponential growth. Then
\begin{align*}
\Gamma_{n+m}\big(H,(f(g_1), \dots, f(g_n), h_1, \dots, h_m)\big)
\end{align*}
also has exponential growth. Thus, by (\ref{prop:lift_fast:S}),
\begin{align*}
\Gamma_{n+m}\big(G,(g_1, \dots, g_n, \tilde h_1, \dots, \tilde h_m)\big)
\end{align*}
has exponential growth. Since the $g_i$'s generate $G$, we know that there is a path in $\Gamma_{n+m}(G)$
\begin{align*}
(g_1, \dots, g_n, \tilde h_1, \dots, \tilde h_m)
\pathto
(g_1, \dots, g_n,1,\dots,1) = S^{(m)}.
\end{align*}
Hence, $\Gamma_{n+m}(G)$ also has exponential growth from $S^{(m)}$, i.e.~$\Gamma_{n+m}(G,S)$ has exponential growth.
Finally, part~(\ref{prop:lift_fast:}) follows immediately from~(\ref{prop:lift_fast:S+}).
\end{proof}

In a different direction, if $G$ has a product replacement graph of exponential growth, so does every quotient of $H$ by a finite subgroup.

\begin{prop}\label{prop:finite_quotient_fast}
Let $G$ and $H$ be finitely generated groups, and let $f:G \ra H$ be a surjective group homomorphism with finite kernel. For every $S \in \Gamma_n(G)$, if $\,\Gamma_n(G,S)$ has exponential growth, then $\,\Gamma_n\big(H,f(S)\big)$ has exponential growth. In particular, if $G$ has exponential Nielsen growth, then  $H$ also has exponential Nielsen growth.
\end{prop}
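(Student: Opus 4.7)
The plan is to apply Proposition~\ref{prop:graph_transfer_near-injective} to the componentwise extension of $f$.

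First I would define $\tilde f : \Gamma_n(G) \to \Gamma_n(H)$ by
\begin{align*}
\tilde f(g_1, \dots, g_n) \deq \big(f(g_1), \dots, f(g_n)\big).
\end{align*}
This is well-defined on vertices because $f$ is surjective: if $g_1, \dots, g_n$ generate $G$, then $f(g_1), \dots, f(g_n)$ generate $f(G) = H$. Moreover $\tilde f$ sends neighbors to neighbors, since Nielsen moves commute with group homomorphisms; explicitly, $\tilde f\big(R_{ij}^{\pm 1} S\big) = R_{ij}^{\pm 1} \tilde f(S)$, and likewise for $L_{ij}^{\pm 1}$.

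Next I would bound the fibers. Let $K = \ker f$, which is finite by hypothesis. If $\tilde f(g_1, \dots, g_n) = \tilde f(g_1', \dots, g_n')$, then $g_i' \in g_i K$ for each $i$, so each fiber has size at most $\abs{K}^n$. This verifies the hypothesis of Proposition~\ref{prop:graph_transfer_near-injective} with constant $C = \abs{K}^n$.

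Finally I would restrict $\tilde f$ to the connected component $\Gamma_n(G,S)$. Since $\tilde f$ sends neighbors to neighbors, its image lies in the connected component containing $\tilde f(S) = f(S)$, which is $\Gamma_n\big(H, f(S)\big)$. Applying Proposition~\ref{prop:graph_transfer_near-injective} to the restricted map $\Gamma_n(G,S) \to \Gamma_n\big(H,f(S)\big)$, which still has fibers of size at most $\abs{K}^n$, we conclude that $\Gamma_n\big(H,f(S)\big)$ has exponential growth from $f(S)$, as desired. The ``in particular'' statement follows by choosing $S$ witnessing exponential Nielsen growth of $G$. There is no real obstacle here; the only thing to check carefully is the finite-to-one property, which is immediate from finiteness of $\ker f$.
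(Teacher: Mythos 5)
Your proposal is correct and follows essentially the same route as the paper: extend $f$ componentwise to $\Gamma_n(G)\to\Gamma_n(H)$, observe that it sends neighbors to neighbors with fibers bounded by $\abs{K}^n$, and apply Proposition~\ref{prop:graph_transfer_near-injective}. The paper's own proof is just a terser version of this, so there is nothing further to reconcile.
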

\begin{proof}
We extend the map $f:G \ra H$, to the map $f:\Gamma_n(G) \ra \Gamma_n(H)$, given by
\begin{align*}
f(g_1, \dots, g_n) = \big(f(g_1), \dots, f(g_h)\big).
\end{align*}
This map sends neighbors to neighbors, and the preimage of each vertex has bounded size. The graph $\Gamma_n(G)$ has exponential growth from~$S$. Hence, by Proposition~\ref{prop:graph_transfer_near-injective}, $\Gamma_n(H)$ has exponential growth from~$f(S)$.
\end{proof}

We summarize the previous three results in the following proposition.
\begin{prop}\label{prop:transfer_fast}
Let $G$ and $G'$ be finitely generated groups, and suppose $G$ is a subgroup, quotient, or extension by a finite group of~$G'$. If $G$ has exponential Nielsen growth, then~$G'$ also has exponential Nielsen growth.
\end{prop}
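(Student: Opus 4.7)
The plan is to observe that Proposition~\ref{prop:transfer_fast} is a direct summary of the three preceding propositions, so the proof is a case analysis with one appeal per case. I will simply unpack the three structural relationships between $G$ and $G'$ and cite the matching transfer result.

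First I would dispose of the case where $G$ is a subgroup of $G'$. Here Proposition~\ref{prop:supergroup_fast} says that if some connected component of a product replacement graph of $H < G'$ has exponential growth, then $G'$ has exponential Nielsen growth, which is exactly the conclusion we need when $H = G$ has exponential Nielsen growth by hypothesis.

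Next, I would handle the case where $G$ is a quotient of $G'$, i.e., there is a surjective group homomorphism $G' \to G$. This is exactly the hypothesis of Proposition~\ref{prop:lift_fast}, and part~(\ref{prop:lift_fast:}) of that proposition states precisely that if the quotient $G$ has exponential Nielsen growth, then so does $G'$.

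Finally, for the case where $G$ is an extension by a finite group of $G'$, I would parse this as: there is a surjective homomorphism $f: G \to G'$ whose kernel is finite. This is exactly the hypothesis of Proposition~\ref{prop:finite_quotient_fast}, which yields that $G'$ has exponential Nielsen growth whenever $G$ does. Combining the three cases gives the result.

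There is essentially no obstacle to overcome beyond correctly matching each structural condition on $G$ and $G'$ to the appropriate one of Propositions~\ref{prop:supergroup_fast}, \ref{prop:lift_fast}, and~\ref{prop:finite_quotient_fast}; the only minor subtlety is keeping straight the direction of the map in the ``extension'' case, where $G$ is the larger group mapping onto $G'$ with finite kernel, as opposed to the ``quotient'' case where $G$ is the smaller group receiving a surjection from $G'$.
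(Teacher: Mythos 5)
Your proposal is correct and matches the paper exactly: the paper offers no separate proof, introducing the proposition with ``We summarize the previous three results,'' and your case analysis citing Propositions~\ref{prop:supergroup_fast}, \ref{prop:lift_fast}(\ref{prop:lift_fast:}), and~\ref{prop:finite_quotient_fast} is precisely the intended argument. Your reading of the ``extension'' case --- a surjection $G \to G'$ with finite kernel --- is the right one.
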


\begin{rem}{\rm
Proposition~\ref{prop:transfer_fast} relates the Nielsen growth of a subgroup $H$ of~$G$
to the Nielsen growth of $G$. We conjecture that for any finite index subgroup $H$ of $G$, if $\Gamma_n(G)$ has exponential growth,
then so does $\Gamma_k(H)$ of $G$, for sufficiently large $k$.  This would imply that the property
of having exponential Nielsen growth respects virtual isomorphism.  More generally,
it would be interesting to see if this property is an invariant under quasi-isometry.}
\end{rem}

The proposition gives us an easy way to prove that a fairly large class of groups have exponential Nielsen growth.

\begin{lemma}\label{lem:infinite_order_fast}
Let $G$ be a finitely generated group. Suppose~$G$ contains an element of infinite order. For every $S \in \Gamma_n(G)$ and every $m \geq n+2$, we have that $\Gamma_m(G,S)$ has exponential growth.
\end{lemma}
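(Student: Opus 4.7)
\medskip

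\noindent\textbf{Proof plan.} The strategy is to combine the existing transfer lemmas with the concrete computation in Example~\ref{ex:Gamma_2_zz}. The element of infinite order gives us a copy of~$\zz$ inside~$G$, and we already know that $\Gamma_2(\zz)$ has exponential growth, so we simply promote that growth up to $\Gamma_m(G,S)$.

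First, let $g \in G$ be an element of infinite order, and set $H = \gen{g} \cong \zz$, a finitely generated subgroup of~$G$. By Example~\ref{ex:Gamma_2_zz}, the product replacement graph $\Gamma_2(\zz)$ has exponential growth (one can see this very concretely from the binary tree on coprime positive pairs). Thus some connected component of $\Gamma_2(H)$ has exponential growth.

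Next, apply Proposition~\ref{prop:supergroup_fast} with this~$H$, the given $S \in \Gamma_n(G)$, and $m = 2$. The proposition hands us exactly what we want in the boundary case: $\Gamma_{n+2}(G,S)$ has exponential growth.

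Finally, to handle all $m \geq n+2$, invoke the embedding remark stated just before Proposition~\ref{prop:supergroup_fast}: whenever $m' \geq m$, the graph $\Gamma_m(G,S)$ embeds into $\Gamma_{m'}(G,S)$ (via padding with trivial entries), and this embedding has uniformly bounded preimage sizes, so Proposition~\ref{prop:graph_transfer_near-injective} transfers exponential growth from $\Gamma_{n+2}(G,S)$ to $\Gamma_m(G,S)$ for every $m \geq n+2$. There is no real obstacle here; the only thing one has to be a little careful about is that Proposition~\ref{prop:supergroup_fast} gives growth from the particular base vertex $S^{(m)}$, which is exactly the base vertex used to define the connected component $\Gamma_{n+m}(G,S)$, so no extra path-construction step is needed.
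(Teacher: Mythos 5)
Your proof is correct and takes essentially the same approach as the paper: both pass to the subgroup $\gen{g} \cong \zz$, cite Example~\ref{ex:Gamma_2_zz} for the exponential growth of $\Gamma_2(\zz)$, apply Proposition~\ref{prop:supergroup_fast} to obtain exponential growth of $\Gamma_{n+2}(G,S)$, and then pad to reach all $m \geq n+2$. The only difference is that you spell out the final embedding/padding step (via Proposition~\ref{prop:graph_transfer_near-injective}) in more detail than the paper's one-line conclusion.
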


\begin{proof}
By assumption, the group $G$ contains a subgroup isomorphic to $\zz$. It is easy to see that $\Gamma_2(\zz)$ has exponential growth (see Example~\ref{ex:Gamma_2_zz}). By Proposition~\ref{prop:supergroup_fast}, it follows that $\Gamma_{n+2}(G,S)$ has exponential growth, and hence so does $\Gamma_m(G,S)$ for every $m \geq n+2$.
\end{proof}

In particular, we can prove that groups of polynomial or exponential growth all have exponential Nielsen growth, which leaves Conjecture \ref{conj:inf_fast} open only for groups of intermediate growth.
\begin{prop}\label{prop:poly_exp_fast}
Let $G$ be an infinite finitely generated group. Suppose that either $G$ has polynomial
or exponential growth. Then $G$ has exponential Nielsen growth.
\end{prop}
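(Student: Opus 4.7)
I would dispatch the two hypotheses separately.

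\textbf{Polynomial growth.} Here I would invoke Gromov's theorem: an infinite, finitely generated group of polynomial growth is virtually nilpotent, so $G$ contains a finite-index nilpotent subgroup $N$, which must itself be infinite. Walking down the lower central series of $N$ produces an infinite, finitely generated abelian quotient containing a copy of $\zz$, and lifting through the central extensions yields an element of infinite order in $N$, hence in $G$. Lemma~\ref{lem:infinite_order_fast} then applies: for any generating $n$-tuple $T$ of $G$, the component $\Gamma_{n+2}(G,T)$ has exponential growth, so $G$ has exponential Nielsen growth.

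\textbf{Exponential growth.} The route through Lemma~\ref{lem:infinite_order_fast} is unavailable if $G$ is torsion (e.g., a free Burnside group of sufficiently large odd exponent), so instead I would transfer the exponential growth of the Cayley graph directly into a product replacement graph. Fix a generating tuple $T=(g_1,\dots,g_d)$ of $G$ and take $T^{(1)}=(g_1,\dots,g_d,1)\in\Gamma_{d+1}(G)$. For any $h=g_{i_1}^{\e_1}\cdots g_{i_r}^{\e_r}\in B_G(r)$, the sequence of $r$ Nielsen moves
\begin{align*}
L_{i_r,\,d+1}^{\e_r},\; L_{i_{r-1},\,d+1}^{\e_{r-1}},\; \dots,\; L_{i_1,\,d+1}^{\e_1}
\end{align*}
modifies only the last coordinate and carries $T^{(1)}$ to $(g_1,\dots,g_d,h)$. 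Distinct choices of $h$ yield distinct endpoints (they differ in the last slot), so
\begin{align*}
\abs{B_{\Gamma_{d+1}(G,T)}(T^{(1)},r)} \,\geq\, \abs{B_G(r)},
\end{align*}
and the exponential growth of $G$ propagates to $\Gamma_{d+1}(G,T)$.

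\textbf{Main obstacle.} There is no deep obstruction in either case, but the torsion subcase of exponential growth is what forces the direct Cayley-to-PRG comparison above rather than the cleaner Lemma~\ref{lem:infinite_order_fast}; the polynomial case collapses to Gromov's theorem plus a standard fact about infinite nilpotent groups.
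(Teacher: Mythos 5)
Your proposal is correct and follows essentially the same route as the paper: Gromov's theorem plus Lemma~\ref{lem:infinite_order_fast} for the polynomial case, and a direct comparison $\abs{B_{\Gamma_{n+1}(G,S)}(S^{(1)},r)} \geq \abs{B_G(r)}$ via Nielsen moves on the appended coordinate for the exponential case. The only difference is that you spell out the standard fact that an infinite virtually nilpotent group has an element of infinite order, which the paper simply asserts.
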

\begin{proof}
Suppose $G$ has polynomial growth. By Gromov's theorem, $G$ is virtually nilpotent \cite{gromov}. It follows that some subgroup of $G$ has infinite abelianization. Thus, $G$ has an element of infinite order and, by Lemma \ref{lem:infinite_order_fast}, $G$ has exponential Nielsen growth.

Now suppose $G$ has exponential growth. Let $S = (g_1, \dots, g_n)$ be a generating $n$-tuple of $G$ and denote $\Gamma = \Gamma_{n+1}(G,S)$. Let $r$ be any positive integer. For any $g \in B_{G,S}(r)$, the distance between $S^{(1)} = (g_1, \dots, g_n, 1)$ and $(g_1, \dots, g_n, g)$ in $\Gamma$ is at most $r$, i.e. $(g_1, \dots, g_n, g) \in B_\Gamma(S,r)$. Thus,
\begin{align*}
\abs{B_\Gamma(S,r)} \geq \abs{B_{G,S}(r)}.
\end{align*}
But $\abs{B_{G,S}(r)}$ grows exponentially in $r$, and thus so does $\abs{B_\Gamma(S,r)}$. That is, $\Gamma = \Gamma_{n+1}(G,S)$ has exponential growth, and therefore $G$ has exponential Nielsen growth.
\end{proof}

\begin{rem}{\rm
The Grigorchuk group $\gg$ does not have an element of infinite order, so Lemma~\ref{lem:infinite_order_fast} is not enough to show that its product replacement graphs have exponential growth. It can be shown that $\Gamma_n(G)$ has exponential growth for sufficiently large $n$ as long as there are elements of $G$ whose order is exponential in their word length (see~\cite{thesis}). The Grigorchuk group $\gg$ does not satisfy this condition either, but some of the generalized Grigochuk groups $\gg_\omega$ do.}
\end{rem}

\subsection{Effective results}

The Grigorchuk group has no elements of infinite order, so Lemma~\ref{lem:infinite_order_fast} is not strong enough to prove it has exponential Nielsen growth. We use a different approach. It is enough to find large cubes in $G$, as follows.

Let $G$ be any group, and let $(g_1, \dots, g_k) \in G^k$, we say the \emph{cube spanned by $(g_1, \dots, g_k)$} is
\begin{align*}
\cC(g_1, \dots, g_n) \deq \setc[\big]{ g_1^{\e_1} \cdots g_n^{\e_n} }{ \e_i \in \{0, 1\} }.
\end{align*}
Observe that $\size{\cC(g_1, \dots, g_n)} \leq 2^n$. We say $(g_1, \dots, g_k)$ is a \emph{cubic} $k$-tuple if
\begin{align*}
\size{\cC(g_1, \dots, g_k)} &
= 2^k.
\end{align*}

\begin{lemma}\label{lem:cubic_fast}
Let $G$ be a finitely generated group, and fix~$S \in \Gamma_n(G)$. Let $\al>1$ be a constant, and $(k_i)$ be a log-dense sequence. Suppose for each $i\geq 1$, there is a path $\ga$ of length at most $\al k_i$ in $\Gamma_{n}(G)$, such that $\gamma$ starts at $S$ and visits some $S_1, \dots, S_{k_i} \in \Gamma_n(G)$ in that order. Suppose further that there is a cubic $k_i$-tuple $(g_1, \dots, g_{k_i})$, where $g_j \in S_j$ for each $1 \leq j \leq k_i$. Then $\Gamma_m(G,S)$ has exponential growth for every $m \geq n+1$.
\end{lemma}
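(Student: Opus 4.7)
The plan is to prove exponential growth of $\Gamma_{n+1}(G,S)$ from the base point $S^{(1)}$, and then appeal to the fact that $\Gamma_{n+1}(G,S)$ embeds isometrically into $\Gamma_m(G,S)$ for $m\ge n+1$ via padding with trivial generators (cf.\ Proposition~\ref{prop:graph_transfer_near-injective}) to conclude the result for all such $m$. To get exponential growth of $\Gamma_{n+1}(G,S)$, I would invoke Lemma~\ref{lem:sparse_exp}: setting $r_i = (\alpha+1)k_i$, the sequence $(r_i)$ remains log-dense, so it suffices to exhibit at least $2^{k_i}$ distinct vertices in $B_{\Gamma_{n+1}(G,S)}\big(S^{(1)}, r_i\big)$ for each $i$, since then $|B(S^{(1)}, r_i)| \geq 2^{k_i} = (2^{1/(\alpha+1)})^{r_i}$ with $2^{1/(\alpha+1)} > 1$.

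To produce those $2^{k_i}$ vertices, I would use the last coordinate of the $(n+1)$-tuple as an accumulator. For each sign vector $\e = (\e_1, \dots, \e_{k_i}) \in \{0,1\}^{k_i}$, construct a path from $S^{(1)}$ in $\Gamma_{n+1}(G)$ as follows: lift the given path $\gamma$ to $\Gamma_{n+1}(G)$ by performing the same Nielsen moves on coordinates $1, \dots, n$ (leaving the last slot alone), and whenever the lifted walk reaches the image of $S_j$ insert, if $\e_j = 1$, a single Nielsen move $R_{p_j,\,n+1}$ where $p_j$ is the position of $g_j$ in $S_j$. That move multiplies the last coordinate on the right by the current value in position $p_j$, which is exactly $g_j$ at that instant. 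The total length of the path is at most $\alpha k_i + k_i = (\alpha+1) k_i$, and at its endpoint the first $n$ coordinates equal $S_{k_i}$ (independent of $\e$), while the $(n+1)$-th coordinate equals
\begin{align*}
w_\e \;=\; g_1^{\e_1} g_2^{\e_2} \cdots g_{k_i}^{\e_{k_i}} \;\in\; \cC(g_1, \dots, g_{k_i}).
\end{align*}

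Since the $k_i$-tuple $(g_1,\dots,g_{k_i})$ is cubic, the $2^{k_i}$ products $w_\e$ are pairwise distinct, so the endpoints $(S_{k_i}, w_\e)$ are $2^{k_i}$ distinct vertices of $\Gamma_{n+1}(G)$, each a valid generating tuple (adjoining any element to the generating set $S_{k_i}$ still generates $G$) and each within distance $(\alpha+1)k_i$ of $S^{(1)}$. Applying Lemma~\ref{lem:sparse_exp} then yields the desired exponential growth. The only delicate point, and the one requiring care rather than ingenuity, is the bookkeeping of the lifted path: one must check that inserting the move $R_{p_j, n+1}$ at the moment the lifted walk sits at $S_j$ is legitimate (position $p_j$ indeed contains $g_j$ at that moment) and that the subsequent segment of $\gamma$, which touches only the first $n$ coordinates, preserves the accumulator built so far. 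I do not expect any serious obstacle beyond this indexing verification; the key conceptual content is the cubic hypothesis, which is precisely what guarantees the $2^{k_i}$ endpoints are distinct.
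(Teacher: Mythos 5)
Your proof is correct and follows essentially the same route as the paper's: reduce to $\Gamma_{n+1}(G,S)$, use the last coordinate as an accumulator along the given path $\gamma$, optionally multiplying by $g_j$ at each $S_j$, and conclude via Lemma~\ref{lem:sparse_exp} that the $2^{k_i}$ distinct cube elements give exponential growth. The only difference is that you spell out the indexing details (the position $p_j$ and the choice of Nielsen move $R_{p_j,\,n+1}$) more explicitly than the paper does.
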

\begin{proof}
It is enough to show that $\Gamma_{n+1}(G,S)$ has exponential growth. Let $\Gamma = \Gamma_{n+1}(G)$, and~$k = k_i$. By Lemma~\ref{lem:sparse_exp}, it suffices to show that
\begin{align*}
\abs{B_\Gamma(S^{(1)}, (\al+1)k)} &
\geq 2^{k}.
\end{align*}
Given $(\e_1, \dots, \e_k) \in \{0,1\}^k$, we traverse the path $\gamma$ in the first $n$ coordinates of $\Gamma_{n+1}(G)$, but when we reach $S_j$, if $\e_j = 1$ we also apply a Nielsen transformation to multiply the last entry by~$g_j$. This gives us a path $\gamma'$ in $\Gamma_{n+1}(G)$ of length at most $\al k + k$. The path $\gamma'$ ends at an element of $\Gamma_{n+1}(G)$ whose last entry is~$g_1^{\e_1} \dots g_n^{\e_n}$. Since $(g_1, \dots, g_k)$ is cubic, there are $2^k$ distinct such elements. Thus, we have constructed $2^k$ distinct elements of $B_\Gamma(S^{(1)},\al k+k)$, as desired.
\end{proof}

\subsection{Connectivity of product replacement graphs}

Recall the \emph{Frattini subgroup} $\Frat(G)$,
\begin{align*}
\Frat(G) = \setc[\big]{g \in G}{\text{if $H \lneqq G$, then $\gen{H, g} \lneqq G$}}.
\end{align*}

\noindent
(see e.g.~\cite[$\S 10.4$]{hall_theory}). It is easy to see that $\Frat(G)$ is a normal subgroup of~$G$.  We need the following
connectivity result by Evans (see \cite[Theorem~4.3]{Eva}).

\begin{thm}[Evans]
\label{lem:frattini_connected}
Suppose $G$ is generated by some $n$-tuple. Let $m \geq n+1$, and suppose $\Gamma_m\big(G/\Frat(G)\big)$ is connected. Then $\Gamma_m(G)$ is connected.
\end{thm}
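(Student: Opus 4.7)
The plan is to exhibit the natural coordinatewise projection $f \colon \Gamma_m(G) \to \Gamma_m\big(G/\Frat(G)\big)$, show that it is a surjective graph ``covering'' in the sense of Proposition~\ref{prop:graph_transfer_covering}, and then reduce the problem to proving that its fibers are connected in $\Gamma_m(G)$.

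First I would verify that $f(g_1,\dots,g_m) = (g_1\Frat(G),\dots,g_m\Frat(G))$ is well-defined on vertices and surjective onto vertices. Well-definedness is immediate: the image of a generating $m$-tuple generates the quotient. Surjectivity is the defining property of the Frattini subgroup: if $(\bar g_1,\dots,\bar g_m)$ generates $G/\Frat(G)$, then for any lifts $g_i$ one has $\langle g_1,\dots,g_m\rangle\cdot\Frat(G) = G$, and iterating the non-generator property of $\Frat(G)$ forces $\langle g_1,\dots,g_m\rangle = G$. Next, $f$ commutes with Nielsen moves coordinatewise, so every Nielsen-neighbor of $f(S)$ in $\Gamma_m\big(G/\Frat(G)\big)$ is the image of the corresponding Nielsen-neighbor of $S$ in $\Gamma_m(G)$; this is the covering condition. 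Consequently, given any $S, T \in \Gamma_m(G)$, connectivity of $\Gamma_m\big(G/\Frat(G)\big)$ gives a path $f(S)\pathto f(T)$ which lifts to a path $S\pathto T^*$ in $\Gamma_m(G)$ with $f(T^*) = f(T)$.

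What remains, and is the crux of the argument, is the \emph{fiber-connectivity} claim: any two generating $m$-tuples $T^* = (g_1,\dots,g_m)$ and $T = (g_1\phi_1,\dots,g_m\phi_m)$ of $G$ with $\phi_i\in\Frat(G)$ are Nielsen-equivalent. By iterating over coordinates it suffices to handle the case of modifying a single coordinate: given a generating $m$-tuple $S$ and $\phi \in \Frat(G)$, show that replacing (say) $g_1$ by $g_1\phi$ yields a Nielsen-equivalent tuple. The approach is to use the slack $m \geq n+1$ to first Nielsen-transform $S$ into a tuple of the form $(g_1, h_2,\dots,h_{m-1}, \phi)$ whose last coordinate is exactly $\phi$ while the first $m-1$ coordinates still generate $G$ (this is where the non-generator property is used: deleting $\phi \in \Frat(G)$ from a generating set leaves a generating set, so such a configuration is a legitimate vertex of $\Gamma_m(G)$), then apply the single Nielsen move $R_{m,1}$ to multiply $g_1$ on the right by $\phi$, and finally reverse the earlier Nielsen moves to restore the other coordinates.

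The main obstacle is the ``stashing'' sub-step: showing that one can Nielsen-transform $S$ into an $m$-tuple whose last coordinate is a prescribed $\phi \in \Frat(G)$ while the first $m-1$ coordinates continue to generate $G$. This uses $m\geq n+1$ essentially — one needs a redundant slot to hold $\phi$ without collapsing the generating property of the other coordinates — together with the fact that $\phi$, being a non-generator, can be expressed as a word in $g_1,\dots,g_m$ and inserted into a coordinate via a sequence of Nielsen multiplications that can subsequently be partially undone. Once this sub-step is in hand, iterating it across all $m$ coordinates connects $T^*$ to $T$ inside the fiber, completing the proof that $\Gamma_m(G)$ is connected.
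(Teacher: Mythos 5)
The paper does not actually prove this statement---it quotes it from Evans \cite[Theorem~4.3]{Eva}---so your attempt has to stand on its own. The first part of your argument is sound: the coordinatewise projection $f$ commutes with Nielsen moves, paths in $\Gamma_m\big(G/\Frat(G)\big)$ lift, and the theorem reduces to showing that two generating $m$-tuples $T$ and $T^*$ with $f(T)=f(T^*)$ are Nielsen equivalent. (One small caveat: for infinite $G$ you cannot literally ``iterate'' the non-generator property over the elements of $\Frat(G)$; the clean argument is that a proper subgroup $H$ of a finitely generated group lies in some maximal subgroup $M$, which contains $\Frat(G)$, so $H\Frat(G)\le M\ne G$.)

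The genuine gap is the ``stashing'' sub-step, and it is not a technicality. Reaching, by Nielsen moves from an arbitrary generating $m$-tuple $S$, a tuple $(h_1,\dots,h_{m-1},\phi)$ with $\phi\in\Frat(G)$ in the last slot is \emph{equivalent} to making $S$ redundant: if the last entry lies in $\Frat(G)$ then $\gen{h_1,\dots,h_{m-1}}=G$ by the non-generator property, and conversely once some entry is redundant you can rewrite that entry into any prescribed element, in particular $\phi$. But whether every generating $m$-tuple of an $n$-generated group with $m\ge n+1$ can be Nielsen-reduced to a redundant one is exactly the difficulty the paper flags in \S\ref{ss:fin-conn}: for infinite groups there may be whole connected components consisting of non-redundant tuples. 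So the assertion that $\phi$ ``can be expressed as a word in $g_1,\dots,g_m$ and inserted into a coordinate'' does not follow---keeping the other entries fixed, left and right multiplications move $g_m$ only within the double coset $\gen{g_1,\dots,g_{m-1}}\,g_m\,\gen{g_1,\dots,g_{m-1}}$, and no further maneuvering escapes the redundancy obstruction. The missing idea is to use the hypothesis that $\Gamma_m\big(G/\Frat(G)\big)$ is connected a \emph{second} time: fix a generating $n$-tuple $(x_1,\dots,x_n)$ of $G$, connect $f(T)$ to $(\bar x_1,\dots,\bar x_n,\bar 1,\dots,\bar 1)$ in the quotient graph (here $m\ge n+1$ guarantees a free slot), and lift that path starting at $T$. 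The endpoint has its last $m-n$ entries in $\Frat(G)$, hence those entries are removable non-generators, so its first $n$ entries already generate $G$; one can then clear the tail to reach $(x_1\phi_1,\dots,x_n\phi_n,1,\dots,1)$ and use the empty slot to strip the Frattini factors $\phi_i$ one at a time, arriving at $(x_1,\dots,x_n,1,\dots,1)$. Doing this for both $T$ and $T^*$ connects each to the same standard tuple. (A secondary flaw in your plan: ``reversing the earlier Nielsen moves'' after applying $R_{m,1}$ does not restore coordinates $2,\dots,m$ if any stashing move used coordinate $1$ as a multiplier, since that coordinate then holds $g_1\phi$ rather than $g_1$.)
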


It is known that for any finite abelian group $G$ with $n$ generators, the product replacement graph $\Gamma_m(G)$ is connected for every $m > n$ \cite{diaconis1999graph} (see also \cite{whatprp}). We use only the following special case, which is easy to verify by hand.
\begin{lemma}\label{lem:(Z_2)^n-connected}
The product replacement graph $\Gamma_m(\zz_2^n)$ is connected for every~$m \geq n$.
\end{lemma}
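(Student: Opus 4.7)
The plan is to translate the problem into linear algebra over $\zz_2$ and carry out Gaussian elimination. Identify a generating $m$-tuple $S=(v_1,\dots,v_m)$ of $\zz_2^n$ with an $m\times n$ matrix over $\zz_2$ whose rows span, equivalently have rank $n$. Because $\zz_2^n$ is abelian and every element equals its own inverse, all four Nielsen moves $R_{ij}^{\pm1}, L_{ij}^{\pm 1}$ collapse to the single elementary row operation ``replace $v_j$ by $v_j+v_i$.'' Moreover, the explicit computation $R_{ij}L_{ji}^{-1}L_{ij}$ quoted earlier in the excerpt produces $(g_1,\dots,g_j^{-1},\dots,g_i,\dots,g_n)$; on $\zz_2^n$ this is a genuine transposition of rows, so arbitrary permutations of the entries of $S$ are realized by sequences of Nielsen moves.

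With these operations in hand, I will reduce every generating $m$-tuple to the single fixed tuple $S^*=(e_1,\dots,e_n,0,\dots,0)$, which proves connectivity of $\Gamma_m(\zz_2^n)$. Starting from $S$, pick indices $i_1<\dots<i_n$ so that $v_{i_1},\dots,v_{i_n}$ form a $\zz_2$-basis of $\zz_2^n$. For each remaining index $j$, expand $v_j=\sum_{k\in K_j} v_{i_k}$ uniquely and apply, one after another, the moves that add $v_{i_k}$ into position $j$ for each $k\in K_j$; this zeros out the $j$-th entry without disturbing the basis entries. After doing this for every $j\notin\{i_1,\dots,i_n\}$, the resulting tuple contains an ordered basis together with $m-n$ zeros (when $m>n$ this step is all that is needed; when $m=n$ it is vacuous). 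Permute the entries so the basis occupies positions $1,\dots,n$. Finally, reduce the invertible $n\times n$ matrix $(v_{i_1},\dots,v_{i_n})$ to $(e_1,\dots,e_n)$ by standard Gaussian elimination over $\zz_2$, using the add-row moves as elementary operations and the three-move swap above whenever a pivot exchange is required. This reaches $S^*$, so any two generating $m$-tuples are connected through $S^*$.

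There is no real obstacle: the argument is a direct repackaging of Gaussian elimination. The only point that deserves care is the boundary case $m=n$, where no zero entry is available to unlock ``free permutations'' via the rule about $g_i=1$; however, the three-move swap is itself a composition of Nielsen moves and works unconditionally on $\zz_2^n$ since inversion is trivial, so row pivoting remains available throughout the reduction.
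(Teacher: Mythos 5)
Your proof is correct, and it is exactly the argument the paper has in mind: the paper gives no written proof (it declares the lemma ``easy to verify by hand,'' citing the Diaconis--Graham result for abelian groups), but its own example of $\Gamma_n(\zz_p^n)$ already sets up the dictionary you use, identifying Nielsen moves with elementary row operations and noting that the components of $\Gamma_n(\zz_p^n)$ are indexed by the determinant --- which for $p=2$ takes only the value $1$, so $\GL_n(\zz_2)=\SL_n(\zz_2)$ is generated by transvections and your Gaussian elimination (including the three-move swap, which is an honest transposition since inversion is trivial in $\zz_2^n$) goes through without obstruction, covering the boundary case $m=n$ as well.
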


In particular, suppose $G/\Phi(G) \cong \zz_2^n$. Then $\Gamma_{m}(G)$ is connected for every~$m > n$.

\begin{rem}{\rm
Theorem~\ref{lem:frattini_connected} is an analogue for infinite groups of the following result in~\cite{LP}
(see also~\cite{whatprp}).  Let~$G$ and $H$ be finite groups with $k$ generators, and $f:G \ra H$ is a
surjective group homomorphism, then the extension $f:\Gamma_k(G) \ra \Gamma_k(H)$ is surjective.
That is, every generating $k$-tuple of $H$ lifts to a generating $k$-tuple of~$G$.  As a corollary,
if $\Gamma_k(G)$ is connected, then so is~$\Gamma_k(H)$.  This claim is not true for infinite groups.
}\end{rem}

\medskip

\section{Automorphisms of the rooted binary tree}\label{s:binary}

In this section, we introduce and discuss properties of the group $\AutT$ of
automorphisms of a binary tree.

\smallskip

\subsection{Definitions}
Let $\Tbin = \{0,1\}^*$ denote the rooted binary tree consisting of finite strings over the alphabet $\{0,1\}$, whose root is the empty string, where the children of the string $s$ are $s0$ and~$s1$. Define $\AutT$ to the group of automorphisms of this tree. Formally, $\AutT$ consists of length preserving bijections $g$ of $\Tbin$ such that for any $s,t \in \Tbin$, $g(st)$ begins with~$g(s)$. To avoid confusion with the bit $1$, we let $\i \in \AutT$ denote the identity element. Let $g\da_s$ denote the action of $g$ on tails of strings beginning with $s$. In other words, we define it to satisfy~$g(st) = g(s)g\da_s(t)$.

Define $a \in \AutT$ to be the automorphism which flips the first bit of~$s$. Formally, $a(0s) = 1s$ and~$a(1s) = 0s$ for all $s \in \T$. Clearly, every element of $\AutT$ either fixes $0$ and $1$ or swaps them. Let $g$ be an element that fixes them. Then $g(0s) = 0g\da_0(s)$ and $g(1s) = 1 g\da_1(s)$. In this case, we write $g$ in one of the following two forms, depending on which is more convenient:
\begin{align*}
g = (g\da_0, g\da_1)
\quad\text{or}\quad
g = \binom{g\da_0}{g\da_1}.
\end{align*}
On the other hand, suppose $g \in \AutT$ swaps $0$ and $1$. Then~$g = a(g\da_0, g\da_1) = (g\da_1, g\da_0)a$. Thus, we can write every element $g \in \AutT$ in the form $(h,k)a^\e$, for some $h,k \in \AutT$ and some $\e \in \{0,1\}$. Moreover $(h,k)a = a(k,h)$ for all $h,k \in \AutT$. In other words, we have $\AutT = (\AutT \times \AutT) \rtimes \zz_2$ where $\zz_2$ acts on $\AutT \times \AutT$ by swapping the two coordinates.

Let $s \in \Tbin$ be a given binary string. We say the \emph{stabilizer} of $s$ is the subgroup of $\AutT$ consisting of those elements $g \in \AutT$ which fix $s$:
\begin{align*}
\Stab(s) \deq \setcst[\big]{g \in \AutT}{ g(s) = s }.
\end{align*}
The $n$-th \emph{level stabilizer} is the subgroup of $\AutT$ consisting of those elements which fix the $n$-th level of $\Tbin$:
\begin{align*}
\Stab_n \deq \bigcap_{s \in \{0,1\}^n} \Stab(s).
\end{align*}
Let $g \in \Stab_n$. The \emph{$n$-support} of $g$ is
\begin{align*}
\supp_n(g) = \setcst[\big]{s \in \{0,1\}^n}{g \da_s \neq \i}.
\end{align*}
Finally, given $s \in \{0,1\}^n$, we define the \emph{rigid stabilizer} of $s$ to be the subgroup
\begin{align*}
\Rist(s) \deq \setcst[\big]{g \in \Stab_n}{\supp_n(g) \subseteq \{s\}}.
\end{align*}
In other words, $\Rist(s)$ consists of those elements of $\AutT$ which fix every string that does not begin with $s$.

For a subgroup $G$ of $\AutT$, define
\begin{align*}
\Stab_G(s) &
= G \cap \Stab(s)
\quad\text{and}\quad
\Rist_G(s)
= G \cap \Rist(s).
\end{align*}
Note that
\begin{align*}
\Rist(0s) &
= \{(g, \i) \mid g \in \Rist(s)\}
= \Rist(s) \times \{\i\},
\\
\text{and } \quad
\Rist(1s) &
= \{(\i,g) \mid g \in \Rist(s)\}
= \{\i\} \times \Rist(s).
\end{align*}

\subsection{Growth in subgroups of $\AutT$}

For distinct $s, s' \in \{0,1\}^m$, elements of $\Rist(s)$ and $\Rist(s')$ have disjoint $n$-support. We use Nielsen transformations to reach many of these elements. This implies we can find a large cubic set, which lets us construct many different generating $n$-tuples.

\begin{lemma} \label{lem:short_rist_fast}
Let $G < \AutT$ be finitely generated, and fix a generating $n$-tuple $S \in \Gamma_n(G)$. Suppose $G$ acts transitively on every level of $\Tbin$. Suppose there is a constant $\al$ such that for every $m\geq 1$, there is a string $s \in \{0,1\}^m$ and a nontrivial element $g \in \Rist_G(s)$ with~$\len(g) \leq \al 2^m$. Then $\Gamma_k(G,S)$ has exponential growth for every $k \geq n+2$
\end{lemma}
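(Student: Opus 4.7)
The plan is to invoke Lemma~\ref{lem:cubic_fast} with the original $n$ replaced by $n+1$, the starting tuple by $S^{(1)}\in\Gamma_{n+1}(G,S)$, and the log-dense sequence taken to be $k_m=2^m$. This will produce exponential growth of $\Gamma_{m'}(G,S^{(1)})=\Gamma_{m'}(G,S)$ for every $m'\ge n+2$; the extension to all $k\ge n+2$ follows from the embedding $\Gamma_{n+2}(G,S)\hookrightarrow\Gamma_k(G,S)$ via Proposition~\ref{prop:graph_transfer_near-injective}.

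The cubic $2^m$-tuples will be built from rigid-stabilizer elements at the $2^m$ strings of level $m$. Starting from the short $g\in\Rist_G(s^*)$ supplied by the hypothesis, I would use transitivity to pick, for each $s\in\{0,1\}^m$, an element $h_s\in G$ with $h_s(s^*)=s$, and set $g_s\deq h_s g h_s^{-1}\in\Rist_G(s)$. Since the subtrees rooted at distinct level-$m$ strings are disjoint and each $g_s$ is nontrivial, any fixed enumeration $(g_{s_1},\ldots,g_{s_{2^m}})$ is a cubic tuple: the product $\prod_i g_{s_i}^{\ve_i}$ restricts to $g_{s_i}^{\ve_i}$ on the subtree at $s_i$, and this restriction is trivial only when $\ve_i=0$.

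The main work is constructing, for each $m$, a path $\gamma_m$ in $\Gamma_{n+1}(G)$ of length $O(2^m)$ that starts at $S^{(1)}$ and visits tuples $T_{s_1},\ldots,T_{s_{2^m}}$ with $g_{s_i}$ in the $(n+1)$st slot of $T_{s_i}$. First, I would load $g$ into slot $n+1$ in at most $\al\cdot 2^m$ Nielsen moves: writing $g=g_{i_1}^{\ve_1}\cdots g_{i_L}^{\ve_L}$, apply $L_{i_L,n+1}^{\ve_L},L_{i_{L-1},n+1}^{\ve_{L-1}},\ldots,L_{i_1,n+1}^{\ve_1}$ in turn. Next, fix a spanning tree of the Schreier graph of the transitive $G$-action on level $m$ and perform a DFS traversal from $s^*$. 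Each tree edge is labelled by some generator $g_i^{\pm 1}$, and I would simulate such an edge in $\Gamma_{n+1}(G)$ by the Nielsen pair $L_{i,n+1}^{\pm 1}$ followed by $R_{i,n+1}^{\mp 1}$, whose net effect is to conjugate the $(n+1)$st slot by $g_i^{\pm 1}$ while leaving the first $n$ slots untouched. The moment the DFS walk first reaches a vertex $s$, the $(n+1)$st slot equals $h_s g h_s^{-1}=g_s$ for $h_s$ the composition of generators along the DFS path from $s^*$ to $s$, and that tuple is the desired $T_s$. The DFS traverses $2(2^m-1)$ tree edges at two Nielsen moves apiece, so $\gamma_m$ has total length at most $\al\cdot 2^m+4(2^m-1)=O(k_m)$, and every intermediate tuple remains a generating $(n+1)$-tuple since $(g_1,\ldots,g_n)$ continues to occupy the first $n$ slots.

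The hard part is exactly this linear efficiency bound: a naive construction that rebuilds each $g_s$ from scratch would cost $\Omega(4^m)$ moves, and feeding that into Lemma~\ref{lem:cubic_fast} would yield only stretched-exponential growth. The DFS-with-two-Nielsen-moves-per-edge device is what amortizes the cost of moving between rigid-stabilizer elements down to $O(1)$ per vertex. Once $\gamma_m$ and the cubic tuple $(g_{s_1},\ldots,g_{s_{2^m}})$ are in hand, Lemma~\ref{lem:cubic_fast} delivers exponential growth of $\Gamma_{n+2}(G,S)$, and Proposition~\ref{prop:graph_transfer_near-injective} extends the conclusion to all $\Gamma_k(G,S)$ with $k\ge n+2$.
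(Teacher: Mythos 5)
Your proposal is correct and follows essentially the same route as the paper's proof: conjugates of the short rigid-stabilizer element along a DFS traversal of a spanning tree of the level-$m$ Schreier graph form a cubic $2^m$-tuple reachable by a path of length $O(2^m)$ in $\Gamma_{n+1}(G)$, and Lemma~\ref{lem:cubic_fast} with $k_m = 2^m$ finishes the argument. The two-Nielsen-moves-per-edge conjugation trick and the $(\al+4)2^m$-type length bound match the paper's computation exactly.
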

\begin{proof}
Given $m$, define $L = \{0,1\}^m$ and~$N = 2^m$. Fix $s \in L$ such that there is a nontrivial $g \in \Rist_G(s)$, satisfying~$\len(g) \leq \al N$.
Since $G$ acts transitively on $L$, we have that the Schreier graph $\Schr_S(G,L)$ is connected. Therefore, $\Schr_S(G,L)$ has a spanning tree $\CT$. Consider a depth-first traversal of $\CT$ with respect to the lexicographic order on $L$, starting at $s$. This is a path of length $2\abs{L}-2 < 2N$ which visits every element of~$L$. Suppose it visits them in the order $s_1, \dots, s_N$. For each $1 \leq i \leq N$, define $h_i$ to be the group element corresponding to the walk along this path from $s$ to $s_i$, so that $(s_1, \dots, s_N) = (h_1(s), \dots, h_N(s))$. Then we have $\len(h_2 h_1^{-1}) + \dots + \len(h_N h_{N-1}^{-1}) \leq 2N = 2^{m+1}$, and $\big(h_1(s), \dots, h_N(s)\big)$ is a permutation of the elements of~$L$.

Since $g \in \Rist_G(s)$, we have $h_i g h_i^{-1} \in \Rist_G\big(h_i(s)\big)$, for all $1 \leq i \leq N$. We claim that
\begin{align*}
(h_1 g h_1^{-1}, \dots, h_N g h_N^{-1})
\end{align*}
is a cubic $N$-tuple, i.e.
\begin{align*}
\size{\left\{\oaprod_{i=1 \dots N} (h_i g h_i)^{\e_i},\;\;\text{where}\;\;\e_i \in \{0,1\}\right\}} = 2^N.
\end{align*}
Indeed, the surjection $\phi: \{0,1\}^N \ra \cC(h_1 g h_1^{-1}, \dots h_N g h_N^{-1})$ given by
\begin{align*}
\phi(\e) \deq \oaprod_{i=1 \dots N} (h_i g h_i^{-1})^{\e_i}
\end{align*}
is also injective, since $\e_i = 1$ if and only if $s_i \in \supp_n \phi(\e)$. Hence $\size{\cC(h_1 g h_1^{-1}, \dots h_N g h_N^{-1})} = 2^N$, as desired.

Since $\len(g) \leq \al 2^m$, there is a path $\ga_1$ in $\Gamma_{n+1}(G,S)$ of length at most $\al 2^m$
\begin{align*}
S^{(1)} &
= (g_1, \dots, g_n, 1)
\pathto
(g_1, \dots, g_n, g)
= (g_1, \dots, g_n, h_1 g h_1^{-1}).
\end{align*}
Observe that the distance in $\Gamma_{n+1}(G,S)$ between $(g_1, \dots, g_n, h_i g h_i^{-1})$ and $(g_1, \dots, g_n, h_{i+1} g h_{i+1}^{-1})$ is at most~$2\len(h_{i+1} h_i^{-1})$. Since $\len(h_2 h_1^{-1}) + \dots + \len(h_N h_{N-1}^{-1}) \leq 2^{m+1}$, there is a path $\ga_2$ in $\Gamma_{n+1}$ of length at most $2^{m+2}$ which starts at $(g_1, \dots, g_n, g)$ and visits each $(g_1, \dots, g_n, h_i g h_i^{-1})$, in that order.

Composing $\ga_1$ and $\ga_2$, we see that there is a path in $\Gamma_{n+1}(G,S)$ of length at most $(\al+4)2^m$ which starts at $S^{(1)}$ and visits generating $(n+1)$-tuples containing $h_1 g h_1^{-1}, \dots, h_N g h_N^{-1}$, in that order. These elements of $G$ form a cubic $2^m$-tuple. Applying Lemma~\ref{lem:cubic_fast} with $k_m = 2^m$, then, tells us that $\Gamma_{k}(G,S)$ has exponential growth for all $k \geq n+2$.
\end{proof}

\begin{Remark}
We cannot replace $\len(g) \leq \al2^n$ in the hypotheses of this lemma with~$\len(g) \leq \al^n$ with some $\al > 2$. Roughly speaking, that would only let us reach a cubic $2^n$-tuple in $\al^n$ steps. Thus, we can only generate an $r^{1/d}$-cube in $B_\Gamma(S,r)$, where~$d = \log_2 \al$, which is not sufficient to guarantee exponential growth. We can, however, replace the assumption that $\len(g) \leq \al 2^n$ with the assumption that we can reach a generating $(n+1)$-tuple containing $g$ in $\al 2^n$ Nielsen moves.
\end{Remark}

\medskip

\section{The Grigorchuk group}\label{sec:grigorchuk}

\subsection{Definition}

The Grigorchuk group $\gg < \AutT$ is defined as $\gg = \gen{a,b,c,d}$, where $a$ flips the first bit of a string, and $b$, $c$, and $d$ are defined recursively by the relations \begin{align*}
b &\deq (a,c)\\
c &\deq (a,d)\\
d &\deq (\i,b).
\end{align*}
It is easy to check that $a^2 = b^2 = c^2 = d^2 = bcd = \i$. Thus, $\gg$ is actually generated by just three elements: $\gg = \gen{a, b, c}$.

Here is an explicit description of the action of these involutions on $\Tbin$.
\begin{align*}
d(1^n) &= 1^n
\\
d(1^n0s) &= \begin{cases}
1^n0s, & n \equiv 0 \pmod 3 \\
1^n0a(s), & n \equiv 1, 2 \pmod 3
\end{cases}
\end{align*}
In other words, $d$ changes at most one bit in a string -- the bit after the first~$0$. Specifically, $d$ flips that bit if and only if the number $n$ of $1$'s in the string up to that point is $1$~or~$2 \pmod 3$. Similarly, $c$ flips it when $n \equiv 0,2 \pmod 3$, and $b$ flips it when $n \equiv 0, 1 \pmod 3$.

\begin{thm}[Gigorchuk]\label{t:grig}
The group~$\gg$ has intermediate growth.
\end{thm}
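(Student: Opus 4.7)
The plan is to establish both halves of intermediate growth for $\gg$: (i) subexponential growth (upper bound) and (ii) superpolynomial growth (lower bound).

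For (i), I would exploit the self-similar wreath decomposition. Using $a^2 = b^2 = c^2 = d^2 = \i$ and $bcd = \i$, every $g \in \gg$ admits a reduced representative alternating between the letter $a$ and single letters from $\{b,c,d\}$. For $g \in \Stab_\gg(1)$ (a subgroup of index $2$), I read off $g = (g\da_0, g\da_1)$ using the relations $b = (a,c)$, $c = (a,d)$, $d = (\i, b)$. Since $d$ projects trivially to the $0$-coordinate, one obtains, after organizing consecutive $\{b,c,d\}$-syllables into blocks of three and exploiting $bcd = \i$ to force a $d$ in each block, the contraction inequality
\begin{align*}
\len(g\da_0) + \len(g\da_1) \leq \tfrac{3}{4}\,\len(g) + C.
\end{align*}
Iterating the decomposition to depth $k$ writes each $g$ with $\len(g) \leq n$ as a $2^k$-tuple of total length at most $(3/4)^k n + O(2^k)$. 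Choosing $k \asymp \log n/\log(8/3)$ balances $(3/4)^k n \asymp 2^k$, so each of the $2^k$ pieces has bounded length. Counting such $2^k$-tuples of bounded-length elements then yields $\abs{B_\gg(n)} \leq \exp(C' n^\alpha)$ with $\alpha = \log 2/\log(8/3) < 1$.

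For (ii), I would invoke Gromov's polynomial growth theorem: any finitely generated group of polynomial growth is virtually nilpotent, and any infinite virtually nilpotent group contains an element of infinite order (obtained from the abelianization of a finite-index nilpotent subgroup). Hence it suffices to show that $\gg$ is infinite and torsion. Infiniteness follows from the branch structure: the rigid stabilizers $\Rist_\gg(s)$ are nontrivial at every level (suitable conjugates of $b,c,d$ along rays produce distinct elements as $\abs{s}$ grows), so $\gg$ has elements of arbitrarily large depth. Torsion is the classical theorem of Grigorchuk: every $g \in \gg$ has order a power of $2$, proved by a refined induction on word length passing from $g \in \Stab_\gg(1)$ to $(g\da_0, g\da_1)$ and tracking both lengths and the distribution of $a$-letters in the two coordinates.

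The main obstacle is the contraction lemma with $\eta$ strictly below $1$. The argument requires a well-chosen canonical form for words in $\gg$ and a delicate accounting of how triples of $\{b,c,d\}$-syllables distribute across the two wreath coordinates; the relation $bcd = \i$ is essential, because it is what forces $d$-letters (the ones projecting trivially to the $0$-side) to appear with positive density. Without the strict inequality $\eta < 1$, iterating the wreath decomposition only recovers the trivial exponential bound $\abs{B_\gg(n)} \leq K^n$, and the subexponential exponent $\alpha = \log 2/\log(2/\eta)$ depends sensitively on the precise value of $\eta$.
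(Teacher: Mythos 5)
The paper offers no proof of this theorem: it is stated as Grigorchuk's result with a citation to \cite{grigorchuk_growth}, so your argument has to stand entirely on its own. Your part (ii) is a valid route to the lower bound (infinite $+$ torsion $\Rightarrow$ not virtually nilpotent $\Rightarrow$ superpolynomial by Gromov), though it uses a much heavier hammer than Grigorchuk's original argument, which extracts an explicit $e^{\sqrt{n}}$ lower bound from the finite-index embedding of $\Stab_\gg(1)$ into $\gg\times\gg$.

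The genuine gap is the contraction inequality in part (i): the level-one bound $\len(g\da_0)+\len(g\da_1)\le \tfrac34\len(g)+C$ is neither established by your mechanism nor true. The relation $bcd=\i$ does \emph{not} force $d$-letters to appear with positive density in a reduced word; it only says $\{\i,b,c,d\}$ is a Klein four-group, so that reduced words alternate $a$ with single letters from $\{b,c,d\}$. A reduced word such as $abab\cdots ab$ contains no $d$ at all, and then every $\{b,c,d\}$-letter deposits one letter into \emph{each} of the two sections (e.g.\ $abab=(ca,ac)$, so $(abab)^k$ projects to two words of total length $4k$, the full length of the input word); the letter count on which your argument relies therefore gives no contraction of the sum at level one. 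The $d$'s acquire positive density only \emph{after iterating the section map}, because taking sections cycles $b\mapsto c\mapsto d\mapsto b$, so each letter passes through $d$ once in every three levels. This is exactly why the standard proof (Grigorchuk; de la Harpe, Ch.~VIII) proves the contraction only at level three, $\sum_{v\in\{0,1\}^3}\len(g\da_v)\le\tfrac34\len(g)+C$, and iterates in steps of three levels (yielding a weaker exponent), or else replaces $\len$ by Bartholdi's weighted length to obtain a level-one contraction. Your stated inequality also cannot be repaired as written: it would give $\abs{B_\gg(n)}\le\exp(Cn^{\alpha})$ with $\alpha=\log 2/\log(8/3)\approx 0.707$, whereas the growth of $\gg$ is known to be $\exp(n^{\alpha_0})$ with $\alpha_0\approx 0.767$ (Bartholdi's upper bound, shown sharp by Erschler and Zheng). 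So the heart of the subexponential half --- the correct contraction lemma and the level at which it holds --- is missing.
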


The theorem was first proved by Grigorchuk in~\cite{grigorchuk_growth}
(see also~\cite{growthintro,de_la_harpe}).

\subsection{Connectivity of $\Gamma_n(\gg)$}  We prove the following result:

\begin{prop}\label{prop:conn-grig}
For each $n \geq 4$, the product replacement graph $\Gamma_n(\gg)$ is
connected (see also~$\S$\ref{ss:fin-amen}).\footnote{After this paper was
written, we learned that the proposition was independently derived
in~\cite{Myr}.}
\end{prop}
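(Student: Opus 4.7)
The plan is to apply Evans's connectivity result (Theorem~\ref{lem:frattini_connected}) in combination with the connectivity of product replacement graphs of elementary abelian $2$-groups (Lemma~\ref{lem:(Z_2)^n-connected}), invoking the remark that immediately follows the latter. The essential task is thus to identify the Frattini quotient of $\gg$.

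First I would record that $\gg$ is $3$-generated: combining $bcd = \i$ with $d^2 = \i$ gives $d = bc$, so $\gg = \gen{a, b, c}$. This is the value of $n$ that will enter Evans's theorem.

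Next I would verify $\gg/\Frat(\gg) \cong \zz_2^3$. The $2$-abelianization $\gg/[\gg,\gg]\gg^2$ is generated by the images of the involutions $a$, $b$, $c$, and one checks directly that these three images are independent, for instance by exhibiting three surjective homomorphisms $\gg \to \zz_2$ that separate them (the first given by the action on the first level of $\Tbin$, the other two obtained similarly from the recursive structure of $b,c,d$). This gives $\gg/[\gg,\gg]\gg^2 \cong \zz_2^3$. The seven proper subgroups of $\gg$ containing $[\gg,\gg]\gg^2$ are then maximal of index $2$, and their intersection is precisely $[\gg,\gg]\gg^2$. To conclude $\Frat(\gg) = [\gg,\gg]\gg^2$ I need that these are \emph{all} the maximal subgroups of $\gg$. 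Every finite quotient of $\gg$ is a $2$-group (by Grigorchuk's torsion theorem), so every finite-index maximal subgroup has index exactly $2$; and by Pervova's theorem, every maximal subgroup of $\gg$ already has finite index.

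With $\gg/\Frat(\gg) \cong \zz_2^3$ in hand, Lemma~\ref{lem:(Z_2)^n-connected} gives that $\Gamma_m(\zz_2^3)$ is connected for all $m \geq 3$, and Theorem~\ref{lem:frattini_connected} applied with $n=3$ then yields connectivity of $\Gamma_m(\gg)$ for all $m \geq 4$. The main non-routine input is Pervova's theorem ruling out maximal subgroups of infinite index; everything else amounts to a short computation of the $2$-abelianization and a straightforward application of the two connectivity tools already assembled in the paper.
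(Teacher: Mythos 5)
Your proposal is correct and follows essentially the same route as the paper: identify $\gg/\Frat(\gg) \cong \zz_2^3$, then combine Lemma~\ref{lem:(Z_2)^n-connected} with Evans's Theorem~\ref{lem:frattini_connected}. The only difference is that the paper simply cites the Frattini quotient computation (Pervova, and Grigorchuk's survey) where you sketch its derivation via the $2$-abelianization and Pervova's theorem on maximal subgroups; that sketch is sound.
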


\begin{proof}
Fix $n \geq 4$. It is known that  $\gg / \Frat(\gg) \cong \zz_2^3$
(see~\cite{grig_frattini} and~\cite[$\S 6$]{solved_unsolved}).
The graph $\Gamma_n(\zz_2^3)$ is connected by Lemma~\ref{lem:(Z_2)^n-connected}.
Thus, by Lemma~\ref{lem:frattini_connected}, $\Gamma_n(\gg)$ is connected.
\end{proof}

\subsection{Exponential growth in $\Gamma_n(\gg)$}

The goal of this section is to prove the following result:

\begin{thm}\label{thm:grigorchuk_fast}
For each $n \geq 5$, the product replacement graph $\Gamma_n(\gg)$ of the Grigorchuk group has exponential growth.
\end{thm}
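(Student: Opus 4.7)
The plan is to apply Lemma~\ref{lem:short_rist_fast} to $G = \gg$ with the generating $3$-tuple $S = (a,b,c)$. This lemma will give exponential growth of $\Gamma_k(\gg,S)$ for all $k \geq 5$. Combined with Proposition~\ref{prop:conn-grig}, which asserts connectivity of $\Gamma_n(\gg)$ for $n \geq 4$, this at once yields exponential growth of the whole graph $\Gamma_n(\gg)$ for $n \geq 5$. So everything reduces to verifying the two hypotheses of Lemma~\ref{lem:short_rist_fast}: (a) level-transitivity of $\gg$ on $\Tbin$, and (b) existence of a constant $\al$ such that for every $m \geq 1$ some string $s \in \{0,1\}^m$ admits a nontrivial $g \in \Rist_\gg(s)$ with $\len(g) \leq \al \cdot 2^m$.

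Hypothesis (a) is standard for the Grigorchuk group and follows by induction on the level: $a$ swaps the two subtrees below the root, and the recursions $b = (a,c)$, $c = (a,d)$ ensure that an element flipping the first bit is produced inside each subtree as well, so any vertex at level $m$ can be carried to $0^m$. Hypothesis (b) is the main technical content, and I would prove it by induction on $m$, constructing short elements $g_m \in \Rist_\gg(1^m)$ with $\len(g_m) \leq C \cdot 2^m$ for a universal constant $C$. The base case $m=1$ is immediate: $d = (\i, b)$ lies in $\Rist_\gg(1)$ and has length $1$.

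For the inductive step I would exploit the self-similar structure of $\gg$. The level-$1$ stabilizer $K = \Stab_\gg(0) \cap \Stab_\gg(1)$ embeds as a finite-index subgroup of $\gg \times \gg$ by $g \mapsto (g\da_0, g\da_1)$, with explicit generators and their images given by $b = (a,c)$, $c = (a,d)$, $d = (\i,b)$, $aba = (c,a)$, $aca = (d,a)$, $ada = (b, \i)$. From these one reads off a uniform "push-down" procedure: given $g_m \in \Rist_\gg(1^m)$ expressed as a word $w$ of length $\ell$ in $a,b,c,d$, one constructs an element of the form $(\i, g_m') \in \gg$ by replacing each letter of $w$ with a bounded-length lift (using combinations of the six generators above), where $g_m'$ is a controlled modification of $g_m$ still lying in $\Rist_\gg(1^m)$. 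The resulting element lies in $\Rist_\gg(1^{m+1})$ and has length at most $2\ell + K$ for some constant $K$ independent of $m$. Iterating gives $\len(g_{m+1}) \leq 2\len(g_m) + K$, so $\len(g_m) = O(2^m)$ as required.

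The main obstacle will be making the inductive step honestly give the factor $2$ (and no worse) in the length bound. The Remark immediately following Lemma~\ref{lem:short_rist_fast} observes that a bound of the form $\al^m$ with $\al > 2$ would \emph{not} suffice, so we cannot afford even a slight loss per level. This forces careful use of the specific lifts above and conjugation tricks such as $b \cdot ada \cdot b = (aba,\i)$, which shifts the support between sibling subtrees at cost $2$ letters. Once the construction of $g_m$ is in place, Lemma~\ref{lem:short_rist_fast} applies and the theorem follows from Proposition~\ref{prop:conn-grig}.
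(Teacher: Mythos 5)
Your overall route is the same as the paper's: reduce to Lemma~\ref{lem:short_rist_fast} applied to the generating $3$-tuple $(a,b,c)$ (plus level-transitivity and the connectivity statement of Proposition~\ref{prop:conn-grig}), and supply the one nontrivial input, namely a nontrivial element of $\Rist_\gg(1^m)$ of length $O(2^m)$ for every $m$, by an induction that pushes an element down one level at a time using the self-similar structure. The reduction, the base case, and the general shape of the induction are all fine.

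The gap is in the inductive step, and you have misidentified where the difficulty lies. The factor of $2$ in the length is essentially automatic: the letter-by-letter substitution $a \mapsto aba$, $b \mapsto d$, $c \mapsto b$, $d \mapsto c$ sends a word $w$ to an element $\sigma(w)$ with $\sigma(w)\da_1 = w$, and on words alternating between $a$ and $\{b,c,d\}$ it exactly doubles the length. The real problem is the \emph{other} coordinate: $\sigma(w)\da_0$ is the image of $w$ under $a \mapsto c$, $b \mapsto \i$, $c \mapsto a$, $d \mapsto a$, hence an essentially arbitrary element of the dihedral group $\gen{a,c}$ (which has order $16$ in $\gg$), and there is no reason for it to be trivial. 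So the lifted element lands in $\Stab_1$ but not in $\Rist(1^{m+1})$, and your sketch gives no mechanism for killing this $0$-section while keeping the $1$-section nontrivial; raising to a power does not work in general because $\gg$ is a torsion $2$-group and the $1$-section may die first, and your conjugation trick $b(ada)b=(aba,\i)$ moves support into the $0$-subtree, which is the wrong direction here. The paper resolves this by maintaining a normal form through the induction: $t_m = \oaprod_{i=1\dots 2^m} abax_i$ with $x_i \in \{b,c,d\}$, for which the substitution gives $\sigma(t_m) = (a^\e, t_m)$ with the $0$-section collapsing to a single $a^\e$, and then \emph{squaring}, so that $t_{m+1}^2 = (\i, t_m^2) \in \Rist(1^{m+1})$ while nontriviality is preserved because $t_m^2\da_{1^m} = t_0^2 \neq \i$ is tracked as part of the induction hypothesis. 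Without this normal form (or an equivalent device) your ``controlled modification'' step is exactly the unproved content of the theorem.
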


The proof is based on Lemma~\ref{lem:short_rist_fast}. Roughly, our strategy is to find an element $g$ of $\Rist_\gg(1^n)$ with length~$O(2^n)$. In $O(2^n)$ more steps, we conjugate $g$ to reach an element of $\Rist_\gg(s)$ for each $s$ on the same level of~$\Tbin$. Then we can construct every product of these conjugates in $O(2^n)$ steps. There are $2^{2^n}$ such products, which gives us exponential growth.

\begin{proof}[Proof of Theorem~\ref{thm:grigorchuk_fast}]
Fix $n \geq 5$. It is easy to check that $\gg$ acts transitively on the levels of $\Tbin$ (see e.g.~\cite[$\S$VIII]{de_la_harpe} or Lemma~\ref{lem:gg_transitive}, below). By Lemma~\ref{lem:short_rist_fast}, it suffices to show that for every $m \geq 0$, there is a nontrivial element of $\Rist(1^m)$ of length at most $2^{m + 4}$ with respect to the generating $3$-tuple $(a,b,c)$.

Define~$t_0 = abab$. Observe that $t_0^2(111)=110$, and therefore $t_0^2\neq \i$. We prove by induction on $m$ that there is a $t_m \in \gg$ of the form
\begin{align}\label{eq:t_m_def}
\tag{$*$}
t_m = \oaprod_{i=1 \dots N} abax_i,
\end{align}
where $N=2^m$, $x_i \in \{b,c,d\}$ for each~$1 \leq i \leq 2^m$, such that $t_m^2 \in \Rist_\gg(1^m)$ and $t_m\da_{1^m} = t_0$. The base case $m = 0$ is trivial.

Given $t_m$ and ($x_i$) related by (\ref{eq:t_m_def}), for each $0 \leq i \leq N$ we define $x'_i \in \{b,c,d\}$ by~$x'_i = (a^{\e_i}, x_i)$ where $\e_i \in \{0,1\}$. We define $t_{m+1}$ by applying the rewriting rules $a \mapsto aba$, $b \mapsto d$,  $c \mapsto b$, $d \mapsto c$ to~$t_m$. Then we have
\begin{align*}
t_{m+1}
&= \left[ \oaprod_{i=1 \dots N} (aba)d(aba)x'_i \right]
= \left[ \oaprod_{i=1 \dots N} \binom{c}{a}\binom{\i}{b}\binom{c}{a}\binom{a^{\e_i}}{x_i} \right]
= \binom{a^\e}{t_m}\,,
\end{align*}
Thus,
\begin{align*}
t_{m+1}^2 &
= (\i, t_m^2) \in \{\i\} \times \Rist(1^m)
= \Rist(1^{m+1}),
\end{align*}
and~$t_{m+1}^2 \da_{1^{m+1}} = t_m^2\da_{1^m} = t_0^2$.

Since $t_m^2 \da_{1^m} = t_0^2 \neq \i$, we can conclude that $t_m^2 \neq \i$. Hence, for every $m\geq 0$, we have that $t_m^2$ is a nontrivial element of $\Rist_\gg(1^m)$, with~$\len_{\gen{a,b,c}}(t_m^2) \leq 2 \len_{\gen{a,b,c,d}}(t_m^2) \leq 2^{m + 4}$, which concludes the proof.
\end{proof}

\medskip

\section{The generalized Grigorchuk groups}\label{s:gen_grig}

In this section, we use the same approach to analyze growth in the product replacement graph of $\gg_\omega$. The same techniques apply, but the technical details are more involved.

\subsection{Definition}

Let $\omega$ be an infinite string in the alphabet\footnote{The usual definition uses the alphabet $\{0,1,2\}$ but for our purposes it is more convenient to use $\{b, c, d\}$.} $\{b, c, d\}$. The \emph{generalized Grigorchuk group} $\gg_\omega$ is the group of automorphisms of $\{0,1\}^n$ given by $\gg_\omega = \gen{a, b_0, c_0, d_0}$. Here, the element $a$ flips the first digit of a string, and for each $x \in \{b,c,d\}$, the elements $x_n$ are defined recursively by
\begin{align*}
x_n \deq (a^\e, x_{n+1}),
\quad\text{ where }\;
\e =
\begin{cases}
0, & x = \omega_n \\
1, & \text{otherwise.}
\end{cases}
\end{align*}
For convenience, we write $b = b_0$, $c = c_0$, and $d = d_0$. As with $\gg$, we have $a^2 = b^2 = c^2 = d^2 = bcd = 1$.

As before, we give a more explicit description of the action of $\gg_\omega$ on $\Tbin$. Given $x \in \{b,c,d\}$ and $s \in \Tbin$,
\begin{align*}
x(1^n) &= 1^n, \text{ and }
\\
x(1^n0s) &=
\begin{cases}
1^n0s, & \omega_n = x \\
1^n0a(s), & \text{otherwise}.
\end{cases}
\end{align*}
Taking $\omega = dcbdcbdcbdcb\dots$ gives the usual Grigorchuk group. The following fact is well-known, but we include a proof here for completeness.

\begin{lemma} \label{lem:gg_transitive}
The generalized Grigorchuk group $\gg_\omega$ acts transitively on every level of~$\Tbin$.
\end{lemma}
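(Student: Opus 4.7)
The plan is to induct on the level $n$ of $\Tbin$, with the induction running uniformly over all choices of the defining string $\omega$. The base cases $n = 0$ and $n = 1$ are immediate, the latter because $a$ alone exchanges the two vertices of level $1$.

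For the inductive step, I will exploit the self-similar recursive structure of the generators. Fix $\omega$ and let $\sigma\omega = \omega_1 \omega_2 \cdots$ be the shift. Since $a$ swaps the two subtrees rooted at the children of the root, it is enough to show that $\Stab_{\gg_\omega}(1)$ acts transitively on the level-$n$ vertices of the right subtree. Every generator $x_0 \in \{b_0, c_0, d_0\}$ fixes the top level and restricts on the right subtree to $x_1$. To also get $a$ on the right subtree, I observe that whenever $\omega_0 \neq x$ we have $x_0 = (a, x_1)$, so using the identity $a(g,h)a = (h,g)$,
\begin{align*}
a \, x_0 \, a \, = \, (x_1, a),
\end{align*}
which lies in $\Stab_{\gg_\omega}(1)$ and restricts to $a$ on the right subtree. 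Since $\omega_0$ equals exactly one of $b, c, d$, at least one such $x$ exists, so the restriction homomorphism $\Stab_{\gg_\omega}(1) \to \Aut(\Tbin)$ has image containing $a, b_1, c_1, d_1$. Unrolling the recursive definition one step identifies $x_1^{\omega}$ with $x_0^{\sigma\omega}$, so this image contains the full generating set of $\gg_{\sigma\omega}$.

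By the inductive hypothesis applied to $\sigma\omega$, the group $\gg_{\sigma\omega}$ acts transitively on level $n$, hence $\Stab_{\gg_\omega}(1)$ acts transitively on the half of level $n+1$ whose vertices begin with $1$. Combining with $a$, which swaps the two halves of level $n+1$, gives full transitivity. The only delicate point is the index-bookkeeping confirming $x_1^\omega = x_0^{\sigma\omega}$, which is the sole place the dependence on $\omega$ enters; everything else is routine.
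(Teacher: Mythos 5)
Your proof is correct, but it takes a genuinely different route from the paper's. The paper fixes $\omega$ and inducts on the level $n$ directly, using the explicit formula for how $b,c,d$ act on strings of the form $1^m0s$: the inductive hypothesis steers the first $n-1$ bits of an arbitrary vertex to $1^{n-2}0$, and then a single generator $x$ with $\omega_{n-2}\neq x$ corrects the last bit, so every vertex reaches the canonical vertex $1^{n-2}00$. You instead strengthen the induction hypothesis to quantify over all defining strings $\omega$ and exploit self-similarity: the restriction of $\Stab_{\gg_\omega}(1)$ to the subtree at the vertex $1$ contains $a$ (via $a\ts x_0\ts a=(x_1,a)$ for any $x\neq\omega_0$) together with $b_1,c_1,d_1$, hence contains a copy of $\gg_{\sigma\omega}$, and transitivity descends one level. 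You are right that the identification $x_{k+1}^{\omega}=x_k^{\sigma\omega}$ is the one point requiring care, and it does hold by unrolling the recursion; the quantification over all $\omega$ is essential to your argument precisely because the section at the vertex $1$ is $\gg_{\sigma\omega}$ rather than $\gg_\omega$. Your version is the standard ``fractal/self-replicating'' argument for spherical transitivity and yields the slightly stronger structural fact that $\Stab_{\gg_\omega}(1)\da_1\supseteq\gg_{\sigma\omega}$, which generalizes readily to other self-similar groups; the paper's version is more elementary and self-contained, working with a single fixed group and only the explicit description of the generators' action on strings.
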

\begin{proof}
We prove that $\gg_\omega$ acts transitively on the $n$-th level by induction on~$n$. This is trivial for $n = 0$, and true for $n = 1$ because~$a \in \gg_\omega$. For $n > 1$, note that it suffices to show that for each~$s \in \{0,1\}^n$, there is a~$g \in \gg_\omega$ such that $g(s) = 1^{n-2}00$. Consider~$s \in \{0,1\}^n$. We know that $s = s'd$, for some $s' \in \{0,1\}^{n-1}$ and~$d \in \{0,1\}$. By the induction hypothesis, $\gg_\omega$ acts transitively on~$\{0,1\}^{n-1}$. Thus there is a $g \in \gg_\omega$ with~$g(s') = 1^{n-2}0$. Then either $g(s) = 1^{n-2}00$ or~$g(s) = 1^{n-2}01$. In the latter case, there is an $x \in \{b,c,d\}$ such that $\omega_{n-2} \neq x$, and then $x(g(s)) = 1^{n-2}00$. In both cases, there is an $h \in \gg_\omega$ with~$h(s) = 1^{n-2}00$.
\end{proof}

\subsection{Exponential growth in $\Gamma_n(\gg_\omega)$}
To prove Theorem~\ref{thm:gg_fast}, we first need some lemmas about~$\gg_\omega$.
A standard computation shows that, under some weak assumptions on~$\omega$,
every element of $\gg_\omega$ has finite order. We will use the following
more specialized result.

\begin{lemma} \label{lem:gg_ad_order}
Suppose $\omega_{n-1} = d$. Then in $\gg_\omega$, we have $(a d_k)^{2^{n-k+1}} = \i$ for every~$0 \leq k < n$.
\end{lemma}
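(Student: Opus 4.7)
My plan is a downward induction on $k$, starting from $k=n-1$ and descending to $k=0$. The recursive definition $d_k = (a^{\e_k}, d_{k+1})$, where $\e_k = 0$ if $\omega_k = d$ and $\e_k = 1$ otherwise, is the main tool; the idea is to compute $(ad_k)^2$ and exploit the fact that it lies in $\AutT \times \AutT$, so that an extra factor of $2$ in the exponent corresponds to descending one level of the tree. Throughout I will use the standard commutation rule $a(h,k) = (k,h)a$ and the (easy) fact that every $d_k$ is an involution --- for instance, by induction from $d_k^2 = (\i, d_{k+1}^2)$, or directly from the explicit action on $\Tbin$, which flips at most one bit of any string.

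For the base case $k = n-1$, the hypothesis $\omega_{n-1} = d$ is what unlocks the calculation: it forces $\e_{n-1} = 0$, so $d_{n-1} = (\i, d_n)$. Then $ad_{n-1} = (d_n, \i)a$, and squaring gives $(ad_{n-1})^2 = (d_n, \i)(\i, d_n) = (d_n, d_n)$; one more squaring yields $(ad_{n-1})^4 = (d_n^2, d_n^2) = \i$, matching $2^{n-(n-1)+1} = 4$.

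For the inductive step, assume $(ad_{k+1})^{2^{n-k}} = \i$, and split into two subcases. If $\omega_k = d$, then $d_k = (\i, d_{k+1})$ and a direct expansion gives $(ad_k)^2 = (d_{k+1}, d_{k+1})$, so $(ad_k)^{2^{n-k+1}} = (d_{k+1}^{2^{n-k}}, d_{k+1}^{2^{n-k}}) = \i$, using only that $d_{k+1}$ is an involution and $n-k \geq 1$. If $\omega_k \neq d$, then $d_k = (a, d_{k+1})$, and a similar expansion yields $(ad_k)^2 = (d_{k+1}a,\, ad_{k+1})$, so
\[
(ad_k)^{2^{n-k+1}} = \bigl((d_{k+1}a)^{2^{n-k}},\; (ad_{k+1})^{2^{n-k}}\bigr).
\]
The second coordinate vanishes by the inductive hypothesis. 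For the first, I use that $d_{k+1}a = d_{k+1}(ad_{k+1})d_{k+1}^{-1}$ (which is immediate from $d_{k+1}^2 = \i$), so $d_{k+1}a$ and $ad_{k+1}$ are conjugate and have the same order; the first coordinate therefore also vanishes.

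The only delicate point is really the base case: the hypothesis $\omega_{n-1} = d$ is exactly what makes the left coordinate of $d_{n-1}$ trivial, collapsing $(ad_{n-1})^2$ to a diagonal pair of involutions and thus anchoring the induction. Without this hypothesis there is no obvious stopping condition --- the recursion in the $\omega_k \neq d$ subcase would propagate indefinitely with nothing terminating it. Beyond that, the main thing to watch for is careful bookkeeping of exponents across the two subcases and explicit verification of the conjugacy identity that trades $(d_{k+1}a)$-powers for $(ad_{k+1})$-powers.
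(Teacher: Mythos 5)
Your proof is correct and follows essentially the same route as the paper: a downward induction on $k$ anchored at $k=n-1$ (where $\omega_{n-1}=d$ forces $d_{n-1}=(\i,d_n)$), using the decomposition $(ad_k)^2=(ad_ka)d_k=(d_{k+1}a^{\e},\,a^{\e}d_{k+1})$. The paper merely handles your two subcases uniformly by writing $d_k=(a^{\e},d_{k+1})$ and noting $(d_{k+1}a^{\e})^{2^{j}}=(a^{\e}d_{k+1})^{-2^{j}}$, which is the same inverse/conjugacy observation you make.
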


\begin{proof}
Since $\omega_{n-1} = d$, we have $d_{n-1} = (\i, d_{n})$ and~$ad_{n-1}a = (d_{n}, \i)$.
We prove the lemma by induction on~$j = n-k$. When $j = 1$, i.e.~$k = n-1$, we have
\begin{align*}
(a d_k)^4
= \left[(a d_{n-1} a) d_{n-1}\right]^2
= \left[\binom{d_{n}}{\i}  \binom{\i}{d_{n}}\right]^2
= \binom{d_{n}^2}{d_{n}^2}
= \i.
\end{align*}
When $j>1$, i.e.\ $k < n-1$, the induction hypothesis tells us~$(a d_{k+1})^{2^{j}} = \i$. Note that also $(d_{k+1})^{2^{j}} = \i$, since $d_{k+1}$ has order~$2$. Then, for some $\e \in \{0,1\}$, we have
\begin{align*}
(a d_k)^{2^{j+1}}
&= \left[(a d_k a) d_k\right]^{2^{j}}
= \left[\binom{d_{k+1}}{a^\e} \binom{a^\e}{d_{k+1}}\right]^{2^{j}}
= \binom{(a^\e d_{k+1})^{- 2^{j}}}{(a^\e d_{k+1})^{2^{j}}}
= \i.
\end{align*}
\end{proof}

\begin{lemma} \label{lem:gg_short_rist}
Suppose $\omega \in \{b,c,d\}^*$ is not eventually constant. Then for each $n\geq 0$, there is a nontrivial $t \in \Rist_{\gg_\omega}(1^n)$ with~$\len(t) \leq 2^{n+2}$.
\end{lemma}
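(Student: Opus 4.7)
The approach is to imitate the recursive construction used for the standard Grigorchuk group in the proof of Theorem~\ref{thm:grigorchuk_fast}, adapting the rewriting to depend on $\omega$. We construct, by induction on $m$, for each shift $\omega'$ of $\omega$ and each $m\ge 0$, a nontrivial element $t^{(\omega')}_m\in\Rist_{\gg_{\omega'}}(1^m)$ of length at most $2^{m+2}$ in the generators $\{a,b,c,d\}$. For the base case $m=0$ one takes $t^{(\omega')}_0=a\cdot(\omega'_0)^{\omega'}\cdot a\cdot(\omega'_0)^{\omega'}$, which has length~$4$ and is nontrivial since $(\omega'_0)^{\sigma\omega'}\neq\i$ (because $\omega$ is not constant).

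The inductive step is a lifting $t^{(\sigma\omega')}_{m-1}\mapsto t^{(\omega')}_m$ implemented by a letter-by-letter rewriting $\phi_{\omega'}\colon\gg_{\sigma\omega'}\to\gg_{\omega'}$ defined on generators by $\phi_{\omega'}(h^{\sigma\omega'})=h^{\omega'}$ for $h\in\{b,c,d\}$ and $\phi_{\omega'}(a)=a\,y^{\omega'}a$, where $y\in\{b,c,d\}\setminus\{\omega'_0\}$ is a parameter. Using $h^{\omega'}=(a^{\e_h},h^{\sigma\omega'})$ (with $\e_h=0\Leftrightarrow\omega'_0=h$) and $a\,y^{\omega'}a=(y^{\sigma\omega'},a)$, one checks that $\phi_{\omega'}(w)\in\Stab_{\gg_{\omega'}}(1)$ with right coordinate $w$; that is, $\phi_{\omega'}(w)=(u(w),w)$ where $u(w)\in\gg_{\sigma\omega'}$ accumulates as the letters of $w$ are rewritten. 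If $w$ has block form $\oaprod_i\bigl(a\cdot(\omega'_0)^{\sigma\omega'}\cdot a\cdot z_i^{\sigma\omega'}\bigr)$ with an even number of blocks satisfying $z_i\neq\omega'_0$, then the left coordinate telescopes to $u(w)=\i$, yielding $\phi_{\omega'}(w)=(\i,w)\in\Rist_{\gg_{\omega'}}(1)$. Hence if $t^{(\sigma\omega')}_{m-1}$ has this block form and lies in $\Rist_{\gg_{\sigma\omega'}}(1^{m-1})$, then $t^{(\omega')}_m\deq\phi_{\omega'}(t^{(\sigma\omega')}_{m-1})\in\Rist_{\gg_{\omega'}}(1^m)$, is nontrivial (its right coordinate $t^{(\sigma\omega')}_{m-1}$ is), and $\len(t^{(\omega')}_m)\le 2\len(t^{(\sigma\omega')}_{m-1})\le 2\cdot 2^{m+1}=2^{m+2}$.

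The main obstacle is the compatibility of block structures across the recursive chain $\omega,\sigma\omega,\ldots,\sigma^n\omega$: the lift $\phi_{\omega'}$ requires the middles of the blocks of $t^{(\sigma\omega')}_{m-1}$ to be of type $\omega'_0$, whereas the recursion one step earlier naturally produces middles of type $(\sigma\omega')_0=\omega'_1$. When $\omega'_0\neq\omega'_1$ one reconciles this by choosing the parameter $y=\omega'_0$ at the previous level, which is valid since $y\neq(\sigma\omega')_0=\omega'_1$; the resulting middles are then of type $\omega'_0$, as required. Iterating, the construction proceeds smoothly provided no two consecutive letters among $\omega_0,\ldots,\omega_{n-1}$ are equal. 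At any position $j$ where $\omega_j=\omega_{j+1}$, the generator $(\omega_j)^{\sigma^j\omega}=(\i,(\omega_j)^{\sigma^{j+1}\omega})$ already lies in $\Rist(1^2)$ of $\gg_{\sigma^j\omega}$ with a word of length~$1$, providing two levels of depth ``for free''; we then continue the recursion on $\sigma^{j+2}\omega$ to reach the remaining depth, well within the length bound. The hypothesis that $\omega$ is not eventually constant is used twice: to guarantee that every generator $(\omega_k)^{\sigma^j\omega}$ is nontrivial (so that $t^{(\omega)}_n\neq\i$), and to guarantee that every initial run of $\omega$ is finite so that the short-circuit can always be invoked when needed.
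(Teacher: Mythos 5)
Your overall strategy --- a letter substitution $h\mapsto h$, \ $a\mapsto a\ts y\ts a$ that doubles word length at each level and pushes a seed one level deeper into the rigid stabilizer --- is exactly the one the paper uses, but the bookkeeping you propose does not close up. You insist that the left coordinate $u(w)$ be \emph{exactly} trivial, and for that you need the input word to carry a rigid block structure (middles equal to the reference letter, plus a parity/inequality condition on the tail letters $z_i$). Neither half of that invariant is available. Your base case $t_0^{(\omega')}=a(\omega'_0)a(\omega'_0)$ is a single block whose middle is $\omega'_0$; when it is fed into $\phi_{\omega''}$ (with $\omega'=\sigma\omega''$) the required middle is $\omega''_0$ while the actual middle is $\omega''_1$, and the tail letter equals the middle instead of differing from the reference letter. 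Concretely, for alternating $\omega$ the first lift already has left coordinate $(y\ts a)^2$, a ``rotation'' in the dihedral group $\gen{a,y}$, which is not trivial in general. Even with a repaired base case the invariant does not propagate: one substitution turns each block $a\ts x\ts a\ts z_i$ into two blocks whose tail letters are $x$ and $z_i$, so the condition needed at the next level refers to a \emph{different} letter of $\omega$ ($z\neq\omega''_0$ rather than $z\neq\omega''_1$), and with a three-letter alphabet nothing forces it, nor the parity count, to survive. The paper avoids this entirely by demanding only that the left coordinate of $t_k$ have order at most $2$ and then passing to $t_k^2=(\i,t_{k+1}^2)$; the order bound comes from a parity count of $d_k$'s (the left coordinate is a reflection in a dihedral group) in two of the three cases $\omega_k\in\{b,c,d\}$, and from Lemma~\ref{lem:gg_ad_order} on $\ord(ad_k)$ in the third, after normalizing $\omega_{n-1}=d$ by relabeling.

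The ``short-circuit'' at a repeated letter is also not an argument. Knowing that the single generator $(\omega_j)_j=(\i,(\i,(\omega_j)_{j+2}))$ lies in $\Rist(1^2)$ exhibits one shallow element there; it does not give a mechanism for transporting an arbitrary element of $\Rist_{\gg_{\sigma^{j+2}\omega}}(1^{n-j-2})$ two levels up into $\Rist_{\gg_{\sigma^{j}\omega}}(1^{n-j})$ as a word of controlled length --- for that you still need a substitution, and you are back to the middle/parity conflict you introduced the short-circuit to escape. Note that the paper's invariant (words $\oaprod a x_i$ with $x_i\in\{b_k,d_k\}$ and an odd number of $d_k$'s) needs no case analysis on repetitions in $\omega$ at all.
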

\begin{proof}
This is trivial if~$n=0$. If $n>1$, then by relabeling $b$, $c$, and $d$ if necessary, we may assume~$\omega_{n-1} = d$.

By induction on $j = n-k$ we show that for every $0 \leq k \leq n$, there is a $t_k$ of the form
\begin{align*}
t_k = \oaprod_{i=1 \dots {2^{n-k}}} ax_i,
\end{align*}
where $x_i \in \{b_k, d_k\}$ for each $i$, and there is an odd number of $i$'s with $x_i = d_k$, such that~$t_k^2 \in \Rist(1^{n-k})$, and $t_k^2\neq \id$.

For $j = 0$, i.e.~$k = n$, we define~$t_n = ad_n$. We know that $d_n = (a^\e, d_{n+1})$ for some $\e \in \{0,1\}$, and therefore we have
\begin{align*}
t_n^2
= (ad_na) d_n
= \binom{d_{n+1}}{a^\e}\binom{a^\e}{d_{n+1}}
= \binom{d_{n+1}a^\e}{(d_{n+1} a^\e)^{-1}}.
\end{align*}
Since $\omega$ is not eventually constant, we know that there is an $m \geq n+1$ with $\omega_m \neq d$. Therefore we have
\begin{align*}
d_{n+1}(1^{m-n-1}00) = 1^{m-n-1}d_m(00) = 1^{m-n-1}0a(0) = 1^{m-n-1}01.
\end{align*}
Hence,~$d_{n+1} \neq \i$. It follows that $d_{n+1}a^\e$ is nontrivial whether $\e = 0$~or $\e=1$. Therefore,~$t_n^2 \neq \i$.

For $j=1$, i.e.~$k = n-1$, we define~$t_k = ab_{n-1}ad_{n-1}$. We have $\omega_k = d$, hence $d_k = (\i, d_{k+1})$ and~$b_k = (a, b_{k+1})$. Therefore, we have:
\begin{align*}
t_{n-1}^2
= \left[(ab_{n-1}a) d_{n-1}\right]^2
= \left[\binom{b_n}{a} \binom{\i}{d_n}\right]^2
= \binom{\i}{(ad_n)^2}
= \binom{\i}{t_n^2}.
\end{align*}

For $j>1$, i.e.~$k < n-1$, let $N = 2^{n-k-1}$. We have
\begin{align*}
t_{k+1} = \oaprod_{i=1 \dots N} ax_i
\end{align*}
from the previous step. For each $1 \leq i \leq N$, we know that $x_i = b_{k+1}$ or $d_{k+1}$, and we define
\begin{align*}
x'_i &
= \begin{cases}
b_k, & x_i = b_{k+1} \\
d_k, & x_i = d_{k+1}.
\end{cases}
\end{align*}
Then $x'_i=(a^{\e_i},x_i)$ for some $\e_i \in \{0,1\}$. We have three possibilities:

\begin{enumerate}[{Case} (i):]

\item \label{itm:b} $\omega_k = b$. Then $d_k = (a, d_{k+1})$ and~$b_k = (\i, b_{k+1})$. Define
\begin{align*}
t_k = \oaprod_{i=1 \dots N} ad_kax'_i.
\end{align*}
The product has an even number of terms, thus, we have not changed the parity of the number of $d_k$'s in the product, which implies it is still odd.

\begin{align*}
t_k
= \oaprod_{i=1 \dots N} (ad_ka)x'_i
= \oaprod_{i=1 \dots N} \binom{d_{k+1}}{a} \binom{a^{\e_i}}{x_i}
= \binom{\oaprod d_{k+1} a^{\e_i}}{t_{k+1}},
\end{align*}
where the final product runs over $i=1\dots N$. Observe that $\e_i = 1$ if and only if~$x_i = d_i$. There are an odd number of such~$i$, therefore $\oaprod d_{k+1} a^{\e_i}$ is a product containing an odd number of~$a$'s and an even number of~$d_{k+1}$'s. The elements $d_{k+1}$ and $a$ have order $2$, so the group $\gen{d_{k+1}, a}$ is a dihedral group in which they are both reflections. Hence, the product $\oaprod d_{k+1} a^{\e_i}$ is also a reflection in that dihedral group, and thus it has order 2. Therefore,~$t_k^2 = (\i, t_{k+1}^2)$.

\item \label{itm:d}  $\omega_k = d$. Then $d_k = (\i, d_{k+1})$ and~$b_k = (a, b_{k+1})$. Define \begin{align*}
t_k = \oaprod_{i=1 \dots N} ab_kax'_i,
\end{align*}
and argue as in case (\ref{itm:b}).

\item \label{itm:c} $\omega_k = c$. Then $d_k = (a, d_{k+1})$ and $b_k = (a, b_{k+1})$. Hence $x_i' = (a, x_i)$ for each~$0\leq i \leq N$. We can again define
\begin{align*}
t_k = \oaprod_{i=1 \dots N} ad_kax'_i.
\end{align*}
Then
\begin{align*}
t_k
= \oaprod_{i=1 \dots N} (ad_ka) x'_i
= \oaprod_{i=1 \dots N} \binom{d_{k+1}}{a} \binom{a}{x_i}
= \binom{(d_{k+1} a)^N}{t_{k+1}}.
\end{align*}
By Lemma~\ref{lem:gg_ad_order}, we have $(d_{k+1} a)^{2N} = (d_{k+1} a)^{2^{n-k}} = \i$. Hence,~$t_k^2 = (\i, t_{k+1}^2)$.
\end{enumerate}

In all three cases, $t_k^2 = (\i, t_{k+1}^2)$. It follows that $t_k^2$ is nontrivial and
\begin{align*}
t_k^2 \in \{\i\} \times \Rist(1^{n-k-1}) &
= \Rist(1^{n-k}).
\end{align*}

Thus, we have a nontrivial $t_0^2 \in \Rist(1^n) \cap \gg_\omega$, with~$\len(t_0^2) \leq 2^{n+2}$, as desired.
\end{proof}

\begin{proof}[Proof of Theorem~\ref{thm:gg_fast}]
It is known that $\gg_\omega/\Frat(\gg_\omega) \cong \zz_2^k$ for some $k \leq 3$ \cite{grig_frattini} (see also \cite[$\S 6$]{solved_unsolved}). Recall from Lemma~\ref{lem:(Z_2)^n-connected} that $\Gamma_n(\zz_2^k)$ is connected. Lemma~\ref{lem:frattini_connected} tells us that $\Gamma_n(\gg_\omega)$ is connected for each~$n \geq 4$.

Assume that $\omega$ is eventually constant. Then it is not hard to check that $\gg_\omega$ has polynomial growth. In fact, $G_\omega$ is virtually abelian \cite[$\S 2$]{solved_unsolved}. It follows that $G_\omega$ has an element of infinite order.
The group $\gg_\omega$ is generated by three elements, $\gg_\omega = \gen{a, b, c}$. By Lemma~\ref{lem:infinite_order_fast}, this implies that the product replacement graph $\Gamma_n(\gg_\omega)$ has exponential growth for each~$n \geq 5$.

Otherwise, if $\omega$ is not eventually constant, for every $m\geq 0$, Lemma~\ref{lem:gg_short_rist} gives a nontrival $t \in \Rist_{\gg_\omega}(1^m)$ of length at most~$2^{m+2}$. Since $\gg_\omega$ acts transitively on the levels of $\Tbin$, we can apply Lemma~\ref{lem:short_rist_fast} to conclude that $\Gamma_6(\gg_\omega)$ has exponential growth from~$(a, b, c, d, 1, 1)$.

Moreover, note that the group $\gg_\omega$ is generated by $(a,b,c)$, and rewriting $t$ as a word in these generators at most doubles its length. Thus, we also have that $\Gamma_5(\gg_\omega)$ has exponential growth from~$(a,b,c,1,1)$. It follows that $\Gamma_n(\gg_\omega)$ has exponential growth for each~$n \geq 5$.
\end{proof}

\medskip

\section{Final remarks}\label{s:fin}



\subsection{}\label{ss:fin-amen}
There are several other directions in which our Theorem~\ref{thm:grigorchuk_fast}
can be extended.  First, there is the problem of smaller~$k$: we believe that
that $\Gamma_3(\gg)$ is connected (cf.~Lemma~\ref{lem:(Z_2)^n-connected}
and Proposition~\ref{prop:conn-grig}).\footnote{See also Corollary~1.2 and
Question~1 in~\cite{Myr}.}
Moreover, it is conceivable that both $\Gamma_3(\gg)$ and $\Gamma_4(\gg)$
have exponential growth, the cases missing from Theorem~\ref{thm:grigorchuk_fast}.

Similarly, in case Conjecture~\ref{conj:grig-nonam} proves too difficult, there is a
weaker and perhaps more accessible open problem.

\begin{conj}
The nearest neighbor random walk on $\Gamma_k(\gg)$ has positive speed, for all $k\ge 5$.
\end{conj}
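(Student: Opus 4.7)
The plan is to upgrade the cubic-tuple machinery from Theorem~\ref{thm:grigorchuk_fast} into a linear lower bound on the asymptotic entropy of the simple random walk on $\Gamma_k(\gg)$, and then convert this into positive speed via the standard entropy-to-speed inequality $H(X_T) \leq \log(T+1) + \mathbb{E}[d(X_0, X_T)] \cdot \log(4k(k-1))$, which holds since $\Gamma_k(\gg)$ is $4k(k-1)$-regular.

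First, I would establish a \emph{uniform branching lemma}: there exist constants $L, c_0 > 0$ depending only on $k$ such that for every vertex $S \in \Gamma_k(\gg)$, the lazy simple random walk $(X_t)$ started at $S$ satisfies $H(X_L \mid X_0 = S) \geq c_0$. The strategy is to first reach a configuration containing the canonical Grigorchuk generators $(a,b,c)$ in a bounded number of Nielsen moves (using Proposition~\ref{prop:conn-grig} together with a quantitative diameter estimate), then execute the cubic-tuple construction of Theorem~\ref{thm:grigorchuk_fast} at a fixed level $m_0$, so that the walk accesses $2^{2^{m_0}}$ distinct endpoints with comparable probability over a prescribed bounded number of Nielsen steps.

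Second, iterating the branching lemma via the chain rule $H(X_{(n+1)L}) \geq H(X_{nL}) + H(X_{(n+1)L} \mid X_{nL}) \geq H(X_{nL}) + c_0$ yields $H(X_{nL}) \geq c_0 n$. Plugging into the entropy-to-speed inequality gives $\mathbb{E}[d(X_0, X_T)] \geq c_1 T$ for some $c_1 > 0$, which is the desired positive speed.

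The main obstacle is the quantitative diameter estimate needed in the branching lemma: from \emph{every} vertex $S \in \Gamma_k(\gg)$ one must exhibit a sequence of Nielsen moves of length bounded by a constant $D_k$ independent of $S$ that reaches a configuration where the rigid-stabilizer construction of Theorem~\ref{thm:grigorchuk_fast} becomes available. Since $\Gamma_k(\gg)$ is not vertex-transitive under any group action preserved by the walk, and the proof of Proposition~\ref{prop:conn-grig} via Evans' theorem (Theorem~\ref{lem:frattini_connected}) is non-effective, this step appears to require genuinely new tools---perhaps a quantitative form of Evans' theorem, or a ``drift lemma'' showing that the walk equilibrates within a region of bounded diameter with positive probability in bounded time. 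A more ambitious alternative would be to prove Conjecture~\ref{conj:grig-nonam} directly, which would yield positive speed immediately via the Kesten spectral radius criterion.
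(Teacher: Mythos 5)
First, a point of order: the statement you are addressing is posed in Section~\ref{s:fin} as an \emph{open conjecture}; the authors explicitly say they intend to return to it, so there is no proof in the paper to compare against. Your proposal must therefore stand on its own, and it contains a genuine gap beyond the one you acknowledge.

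The fatal step is the second one. The chain rule gives $H(X_{(n+1)L}) \leq H(X_{nL}, X_{(n+1)L}) = H(X_{nL}) + H(X_{(n+1)L} \mid X_{nL})$, so the inequality you wrote points the wrong way: conditional entropy produced locally by the walk does not accumulate in the marginal distribution of $X_T$. The simple random walk on $\zz$ produces a constant amount of conditional entropy per step, yet $H(X_T)$ grows only like $\tfrac12\log T$, because trajectories recombine; this recombination is exactly the obstruction that makes positive speed (equivalently, by the entropy--speed inequality you cite, linear entropy) a nontrivial property. This is also why the cubic-tuple machinery of Theorem~\ref{thm:grigorchuk_fast} does not transfer directly: Lemmas~\ref{lem:cubic_fast} and~\ref{lem:short_rist_fast} exhibit $2^{2^m}$ endpoints reachable by \emph{prescribed} paths of length $O(2^m)$, but say nothing about the probability that the random walk follows such a path, nor that the toggle data $(\e_1,\dots,\e_{2^m})$ is recoverable from the endpoint under the walk measure. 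To salvage an entropy argument one would need a ``lamp configuration'' functional of $X_T$ (for instance, the $m$-supports of a distinguished coordinate) whose entropy at time $T$ is provably linear in $T$ under the walk measure itself; that requires controlling where the walk actually goes, which is a different and harder problem than controlling what it can reach. Your acknowledged gap is also real: Proposition~\ref{prop:conn-grig} rests on Evans' theorem (Theorem~\ref{lem:frattini_connected}), which is non-effective, and $\Gamma_k(\gg)$ is not vertex-transitive, so there is no uniform bounded-time access to the configurations where the rigid-stabilizer construction applies. The closing suggestion that Conjecture~\ref{conj:grig-nonam} would yield positive speed via Kesten's criterion is correct, but it replaces the problem with a strictly harder one that the paper also leaves open.
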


The speed of r.w.~is defined as the limit of $\ee[\dist(t)/t]$ as $t\to \infty$,
where \ts $\dist(t)$ is the distance of the r.w.~after~$t$ steps, from the starting vertex.
It is known that non-amenable graphs have positive speed, but so do some amenable graphs,
such as the standard Cayley graph of the lamplighter group $\zz_2 \wr \zz^3$
(see e.g.~\cite{Pete, Woess}).  We believe it might be possible to extend our approach
to establish the positive speed of r.w.~on $\Gamma_k(\gg)$, and we intend to return
to this problem.

\subsection{}\label{subsec:automorphisms_fast}
It is easy to check that if $\Aut(G)$ has a finitely generated subgroup $A = \gen{\phi_1, \dots, \phi_k}$ of exponential growth, then for any $S = (s_1, \dots, s_n) \in \Gamma_n(G)$, the graph $\Gamma_{2n}(G,S)$ has exponential growth. Indeed, for any $\psi \in A$ with $\len(\phi) \leq r$, we have that $(\psi(s_1), \dots, \psi(s_n), 1, \dots, 1)$ is within $O(r)$ Nielsen moves of $(s_1, \dots, s_n, 1, \dots 1)$, since there there are paths of bounded length
\begin{align*}
\big( \psi(s_1), \dots, \psi(s_n), 1, \dots, 1 \big)
&\pathto
\big( \psi(s_1), \dots, \psi(s_n), \psi(\phi_i(s_1)), \dots, \psi(\phi_i(s_n))\big)
\\
&\pathto
\big( 1, \dots, 1, \psi(\phi_i(s_1)), \dots, \psi(\phi_i(s_n))\big)
\\
&\pathto
\big(\psi(\phi_i(s_1)), \dots, \psi(\phi_i(s_n)), 1, \dots, 1\big).
\end{align*}
Unfortunately, this assumption does not hold for the Grigorchuk group (see \cite{grigorchuk_aut}).

\subsection{}
It is tempting to try to prove that the Grigorchuk group $\gg$ has exponential Nielsen growth by extending the observation in Subsection~\ref{subsec:automorphisms_fast} to endomorphisms rather than automorphisms. In order to show that $\gg$ has exponential Nielsen growth, it is enough to show that some subgroup $K$ of $\gg$ has exponential Nielsen growth. Indeed, there is a finite index subgroup $K = \gen{t,v,w}$ of $\gg$ which satisfies $K \times K \subset K$ (see~\cite[$\S$VIII.30]{de_la_harpe}). It follows that $K$ has a free monoid of endomorphisms, generated by $\phi_0$ and $\phi_1$. Then, for any word $\psi$ in $\phi_0$ and $\phi_1$ with $\len(\psi) \leq r$, one might hope that $(t,v,w, \psi(t),\psi(v),\psi(w), 1,1,1)$ is within $O(r)$ Nielsen moves of $(t,v,w, 1,1,1, 1,1,1)$. This would be the case if the following paths had bounded length:
\begin{align*}
\big(t,v,w, \psi(t),\psi(v),\psi(w), 1,1,1\big)
&\pathto
\big(t,v,w, \psi(t),\psi(v),\psi(w), \psi(\phi_i(t)),\psi(\phi_i(v)),\psi(\phi_i(w))\big)
\\
&\pathto[\star]
\big(t,v,w, 1,1,1, \psi(\phi_i(t)),\psi(\phi_i(v)),\psi(\phi_i(w))\big)
\\
&\pathto
\big(t,v,w, \psi(\phi_i(t)),\psi(\phi_i(v)),\psi(\phi_i(w)), 1,1,1\big).
\end{align*}
However, it is not clear that the ``cleanup'' step marked with $^\star$ can be done in a bounded number of Nielsen moves. Hence, unfortunately, the fact that $\gg$ has a finite index subgroup with a free monoid of endomorphisms is not enough to conclude that $\gg$ has exponential Nielsen growth.

%
%
%




\subsection{}\label{ss:fin-conn}
The connectivity of product replacement graphs is delicate already for finite groups.
For example, Dunwoody showed in~\cite{dunwoody_nielsen}, that if $G$ is a finite
solvable group with $d$ generators, then $\Gamma_k(G)$ is connected,
for every $k > d$ (see also~\cite{whatprp}). This property is conjectured to hold
for all finite groups, but fails for infinite groups, even for metabelian groups
(see~\cite{whatprp} and references therein).

As of now, is unknown whether for any finitely generated group $G$, graphs $\Gamma_k(G)$
are connected for all sufficiently large~$k$. It is not even known that if $\Gamma_k(G)$
is connected then $\Gamma_{k+1}(G)$ is connected. The difficulty arises from the
possibility that $\Gamma_{k+1}(G)$ has a connected component which consists of
non-redundant generating $(k+1)$-tuples.
However, it is not hard to check that in $\Gamma_{2d}(G)$ every element of the form
$(g_1, \dots, g_d, 1, \dots, 1)$ lies in the same connected component, which we may
call $\Gamma_{2d}^\star(G)$. Then if we know that some connected component of
$\Gamma_d(G)$ has exponential growth, we know that $\Gamma_{2d}^\star(G)$ has
exponential growth.


\subsection{}\label{ss:fin-finite}
Finally, let us mention that the notion of exponential Nielsen growth may be applicable to
sequences of finite groups, which stabilize in a certain sense.  Proving such a result would
be a step towards proving expansion of product replacement graphs of general finite
groups (see~\cite{whatprp,Pak2}).  We refer to~\cite{Black} for the notion of
growth of finite groups, and to~\cite{Ell} for a recent conceptual approach.

\vskip.7cm

\noindent
{\bf Acknowledgements.} \ts The authors are grateful to Tatiana Nagnibeda who brought
to our attention a question on connectivity and exponential growth of $\Gamma_k(\gg)$,
and help with the references.  We are also very thankful to Slava Grigorchuk and
Martin Kassabov for interesting conversations on groups of intermediate growth, and to
Yehuda Shalom for helpful remarks on uniform growth.  The second author was partially
supported by the BSF and the~NSF.

\vskip1.4cm

\end{document}